\theoremstyle{plain}
\newtheorem{thm}{Theorem}[section]
\newtheorem{lem}[thm]{Lemma}
\newtheorem{prop}[thm]{Proposition}
\newtheorem{ques}[thm]{Question}
\theoremstyle{definition}
\newtheorem{exam}[thm]{Example}
\theoremstyle{remark}
\newtheorem{rem}[thm]{Remark}
\newcommand{\intnonneg}{\mathbb{Z}_{\geq 0}}
\newcommand{\realnonneg}{\mathbb{R}_{\geq 0}}
\newcommand{\kk}{\Bbbk}
\newcommand{\Fc}{\mathcal{F}}
\newcommand{\QQ}{\mathbb{Q}}
\newcommand{\RR}{\mathbb{R}}
\newcommand{\xb}{{\bf x}}
\newcommand{\yb}{{\bf y}}
\newcommand{\ub}{{\bf u}}
\newcommand{\wb}{{\bf w}}
\newcommand{\zb}{{\bf z}}
\newcommand{\vb}{{\bf v}}
\DeclareMathOperator{\supp}{supp}
\DeclareMathOperator{\ini}{in}
\title{Non-finitely generated monoids corresponding to finitely generated homogeneous subalgebras}
\date{}
\author[A. Higashitani]{Akihiro Higashitani}
\address{Department of Pure and Applied Mathematics, Graduate School of Information
Science and Technology, Osaka University, Osaka, Japan}
\email{higashitani@ist.osaka-u.ac.jp}
\author[K. Tani]{Koichiro Tani}
\address{Department of Pure and Applied Mathematics, Graduate School of Information
Science and Technology, Osaka University, Osaka, Japan}
\email{tani-k@ist.osaka-u.ac.jp}
\subjclass{Primary: 13E15, Secondary: 13P10, 20M25}
\keywords{SAGBI basis, monoid algebra, finitely generated}
\begin{document}

\begin{abstract}
The goal of this paper is to study the possible monoids appearing 
as the associated monoids of the initial algebra of a finitely generated homogeneous $\kk$-subalgebra of a polynomial ring $\kk[x_1,\ldots,x_n]$. 
Clearly, any affine monoid can be realized since the initial algebra of the affine monoid $\kk$-algebra is itself.  
On the other hand, the initial algebra of a finitely generated homogeneous $\kk$-algebra is not necessarily finitely generated. 
In this paper, we provide a new family of non-finitely generated monoids which can be realized as the initial algebras of finitely generated homogeneous $\kk$-algebras. 
Moreover, we also provide an example of a non-finitely generated monoid which cannot be realized as the initial algebra of any finitely generated homogeneous $\kk$-algebra. 
\end{abstract}

\maketitle

\section{Introduction}
Let $\kk$ be a field, $S = \kk[x_1, \ldots, x_n]$ the polynomial ring in $n$ variables over $\kk$, and $\preceq$ a monomial order on $(\intnonneg)^n$. 
We use an abbreviation of monomials $x_1^{u_1} \cdots x_n^{u_n}$ with $x^\ub$ for $\ub = (u_1, u_2, \ldots, u_n) \in (\intnonneg)^n$. 
Given a non-zero polynomial $f = \sum c_{\ub} x^\ub \in S$ with $c_{\ub} \in \kk$, 
we define $\supp{f} \coloneqq \{\ub \in (\intnonneg)^n \mid c_{\ub} \ne 0\}$, 
$\deg_{\preceq}{f} \coloneqq \max_{\preceq}(\supp{f})$, and $\ini_{\preceq}{f} = c_{\deg_{\preceq}{f}}x^{\deg_{\preceq}{f}}$.
Let $R$ be a finitely generated $\kk$-subalgebra of $S$. We define $\deg_{\preceq}{R} \coloneqq \{\deg_{\preceq}{f} \mid f \in R \backslash \{0\}\}$ 
and $\ini_{\preceq}{R}$ the $\kk$-vector space spanned by $\{\ini_{\preceq}{f} \mid f \in R\}$, called \textit{initial algebra} of $R$. 
A subset $\Fc$ of $R$ is said to be \textit{SAGBI basis} of $R$ if the $\kk$-algebra generated by $\{\ini_{\preceq}{f} \mid f \in \mathcal{F}\}$ is equal to $\ini_{\preceq}{R}$. 
The word "SAGBI" is introduced by Robbiano and Sweedler~\cite{robbianosubalgebra} and  stands for "Subalgebra Analog to Gr\"{o}bner Bases for Ideal". 
Remark that $\ini_{\preceq}{R}$ is not necessarily a finitely generated $\kk$-algebra even if $R$ is finitely generated, 
so $R$ may have no finite SAGBI basis with respect to any monomial orders. 
The sufficient condition for $R$ to have infinite SAGBI basis is not found yet as far as the authors know. 
In \cite{kurodanewclass}, it is claimed that it will be helpful for this problem to study the mechanism of non-finite generation of the initial algebras. 
The goal of this paper is to study non-finitely generated monoids appearing as non-finitely generated homogeneous monoid $\kk$-algebras. 

Let $M$ be a monoid, a set with an operation $M \times M \rightarrow M$ that is associative and has the identity. 
For a monoid $M \subset (\intnonneg)^n$ and a field $\kk$, $\kk[M]$ is a $\kk$-vector space with the base $\{x^\ub \mid \ub \in M\}$. 
Since both $\kk[M]$ and $\ini_{\preceq}{R}$ are $\kk$-subalgebras of $S$ generated by monomials, there exists a monoid $M$ such that
\[\ini_{\preceq}{R} = \kk[M]\]
for any initial algebras $\ini_{\preceq}{R}$. Concretely, such a monoid $M$ is $\deg_{\preceq}{R}$. 
Therefore, to check whether $\ini_{\preceq}{R}$ is finitely generated or not is equivalent to check whether $M(=\deg_{\preceq}{R})$ is so. 
In that sense, the following question naturally arises. 
\begin{ques}\label{q:main}
Can we characterize non-finitely generated monoids arising from some finitely generated $\kk$-subalgebras? 
\end{ques}

Towards the solution of Question~\ref{q:main}, in this paper, we concentrate on our discussion in the case of homogeneous subalgebras of $\kk[x, y]$. 
In particular, we mainly study subalgebras generated by one homogeneous binomial $x^{\vb_1}+x^{\vb_2}$ and 
finitely many monomials $x^{\ub_1}, x^{\ub_2}, \ldots, x^{\ub_t}$. 
  
There are two main results in this paper. 
The first main result is to provide a class of non-finitely generated monoids that correspond to some finitely generated subalgebras. 
\theoremstyle{plain}
\newtheorem*{MainTheorem1}{\rm\bf Theorem~\ref{thm:fg subalg}}
\begin{MainTheorem1}
Let $\vb_1,\vb_2 \in (\intnonneg)^2$ be linearly independent over $\QQ$. 
Let $\preceq$ be a monomial order with $\vb_1 \succeq \vb_2$ and let $C \subset \realnonneg$ be the cone generated by $\vb_1, \vb_2$. 
We take $\ub_1, \ub_2, \ldots, \ub_s$ from $(\intnonneg)^2 \cap C^{\circ}$, where $C^\circ$ denotes the interior of $C$.  
Let $N$ be the monoid generated by $\vb_2$, and let $L$ be an $N$-module generated by $\ub_1, \ub_2, \ldots, \ub_s$. 
We define $M$ by setting the monoid generated by $\{\vb_1\} \cup L$. 
If $R$ is a $\kk$-algebra generated by 
\[G \coloneqq \{x^{\vb_1}+ x^{\vb_2}\} \cup \{x^\ub \mid \ub \in L\},\]
then $R$ is finitely generated. Moreover, for a monomial order $\preceq$ with $\vb_1 \succeq \vb_2$, we have $\ini_{\preceq}{R} = \kk[M]$. 
In particular, $G$ is an infinite SAGBI basis of $R$. 
\end{MainTheorem1}
  
The following second main result is to show that 
submonoids of $(\intnonneg)^2$ do not necessarily correspond to some initial algebras of finitely generated homogeneous subalgebras.
\newtheorem*{MainTheorem2}{\rm\bf Theorem~\ref{thm:nonexistence of the monoid}}
\begin{MainTheorem2}
Let $M$ be a submonoid of $(\intnonneg)^2$ generated by infinitely many irreducible elements $\{(1, n^2) \mid n \in \intnonneg\}$. 
Then, for any subalgebra $R$ generated by finitely many homogeneous polynomials in $\kk[x, y]$ and any monomial order $\preceq$ of $(\intnonneg)^2$, 
$\ini_{\preceq}{R}$ is never equal to $\kk[M]$. 
\end{MainTheorem2}

This paper is organized as follows. In Section~\ref{SAGBI basis criterion, monoids and cones}, 
we prepare the fundamental materials on SAGBI basis, monoids and cones. 
In Section~\ref{Examples of monoids and subalgebras}, we enumerate examples of monoids and subalgebras, and show the first main result as a generalization of them. 
In Section~\ref{Nonexistence of the monoid algebra as a initial algebra}, we give a proof of the other main result, Theorem~\ref{thm:nonexistence of the monoid}. 
In Section~\ref{Other examples}, we display examples that do not suit the class in Section~\ref{Examples of monoids and subalgebras}. 

\section*{Acknowledgements}
The first named author is partially supported by KAKENHI 21KK0043. 
\bigskip

\section{SAGBI basis criterion, monoids and cones}\label{SAGBI basis criterion, monoids and cones}
In this section, we introduce the fundamental materials on SAGBI basis, monoids, and cones.

We first provide the SAGBI basis criterion. 
We use the notation used in~\cite[Chapter 11]{sturmfelsgrobner} on SAGBI basis.
Algorithm~\ref{algorithm:subduction} is a modification of \cite[Algorithm 11.1]{sturmfelsgrobner}.

\begin{algorithm}[ht]
  \caption{(The subduction algorithm)}\label{algorithm:subduction}
  \begin{algorithmic}
    \Require $\mathcal{F} = \{f_1, f_2, \ldots, f_s\} \subset S$, $f \in S$
    \Ensure $q \in \kk[\mathcal{F}], r \in S$ such that $f = q + r$
    \State $q \coloneqq 0; r \coloneqq 0$
    \State $p \coloneqq f$
    \While{$p \notin \kk$}
      \State find $i_1, i_2, \ldots, i_s \in \intnonneg$ and $c \in \kk\backslash\{0\}$ such that
      \begin{align}\label{eqn:initial representation}
	 \tag{$\ast$}
       \ini_{\preceq}{p} = c \cdot {\ini_{\preceq}{f_1}}^{i_1} \cdot {\ini_{\preceq}{f_2}}^{i_2} \cdots {\ini_{\preceq}{f_s}}^{i_s}.
     \end{align}
      \If {representation~(\ref{eqn:initial representation}) exists}
      \State $q \coloneqq q + c \cdot f_1^{i_1} \cdot f_2^{i_2} \cdots f_s^{i_s}$
      \State $p \coloneqq p - c \cdot f_1^{i_1} \cdot f_2^{i_2} \cdots f_s^{i_s}$
      \Else
      \State $r \coloneqq r + \ini_{\preceq}{p}$
      \State $p \coloneqq p - \ini_{\preceq}{p}$
      \EndIf
    \EndWhile
    \State \Return $q, r$
  \end{algorithmic}
\end{algorithm}

Let $\ub_1, \ub_2, \ldots, \ub_s \in (\intnonneg)^n$ with $\ini_{\preceq}{f_i} = x^{\ub_i}$, 
$\mathcal{A} = (\ub_1, \ub_2, \ldots, \ub_s)$ the $n \times s$-matrix whose columns are $\ub_i$'s, 
and let $I_{\mathcal{A}}$ be the toric ideal of $\mathcal{A}$, i.e. the kernel of a $\kk$-algebra homomorphism
\begin{equation*}
  \kk[X_1, X_2, \ldots, X_s] \rightarrow \kk[x_1, x_2, \ldots, x_n], \quad X_i \mapsto x^{\ub_i}. 
\end{equation*}
Thanks to Proposition~\ref{prop:sagbi criterion}, we can determine whether $\mathcal{F}$ is a SAGBI basis. 

\begin{prop}[{\cite[Corollary 11.5]{sturmfelsgrobner}}]\label{prop:sagbi criterion}
Let $\{p_1, p_2, \ldots, p_t\}$ be generators of the toric ideal $I_{\mathcal{A}}$. 
Then $\mathcal{F}$ is a SAGBI basis if and only if Algorithm~\ref{algorithm:subduction} subduces 
$p_i(f_1, f_2, \ldots, f_s)$ to an element of $\kk$ for all $i$. 
\end{prop}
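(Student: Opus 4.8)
The statement is the SAGBI analogue of Buchberger's criterion, so the plan is to mimic the $S$-polynomial argument, with the binomials generating $I_{\mathcal{A}}$ playing the role of $S$-polynomials. Throughout I normalize so that each $\ini_\preceq f_j = x^{\ub_j}$ is monic, and I grade $\kk[X_1,\ldots,X_s]$ by $\deg_{\mathcal{A}}(X_j) = \ub_j$, so that $I_{\mathcal{A}}$ is an $\mathcal{A}$-homogeneous ideal whose generators $p_i$ may be taken $\mathcal{A}$-homogeneous (indeed binomial) of degree $\mathbf{c}_i := \deg_{\mathcal{A}}(p_i)$; this is harmless because the homogeneous components of any generating set of a graded ideal again generate it. The first reduction I would record is a lemma: $\mathcal{F}$ is a SAGBI basis if and only if Algorithm~\ref{algorithm:subduction} subduces \emph{every} $f \in R$ to an element of $\kk$. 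For the forward direction, observe inductively that as long as only the ``if'' branch is used the partial remainder $p$ stays in $R$; since $\ini_\preceq R = \kk[\ini_\preceq f_1,\ldots,\ini_\preceq f_s]$ forces representation $(\ast)$ to exist for every $p \in R \setminus \kk$, the ``else'' branch is never used, so $r=0$ and $p$ terminates at a constant. Conversely, the first subduction step exhibits $\ini_\preceq f$ as $c\prod_j (\ini_\preceq f_j)^{i_j} \in \kk[\ini_\preceq f_j]$, so subducibility of all $f \in R$ yields $\ini_\preceq R \subseteq \kk[\ini_\preceq f_j]$, the reverse inclusion being clear.

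Granting this lemma, necessity is immediate: each $p_i(f_1,\ldots,f_s)$ lies in $R$, hence subduces to a constant. The substance is sufficiency. Fix a nonzero $f \in R$ and, among all representations $f = \sum_{\alpha} c_\alpha \prod_j f_j^{\alpha_j}$ with $c_\alpha \in \kk$ and $\alpha$ ranging over a finite subset of $\intnonneg^s$, choose one minimizing $\mathbf{d} := \max_\preceq\{\mathcal{A}\alpha : c_\alpha \neq 0\}$. Since $\deg_\preceq f \preceq \mathbf{d}$ always holds, it suffices to show $\mathbf{d} = \deg_\preceq f$: then the coefficient $\sum_{\mathcal{A}\alpha = \mathbf{d}} c_\alpha$ of $x^{\mathbf{d}}$ in $f$ is nonzero, so $\ini_\preceq f = \left(\sum_{\mathcal{A}\alpha=\mathbf{d}} c_\alpha\right) x^{\mathbf{d}}$ with $x^{\mathbf{d}} = \prod_j (\ini_\preceq f_j)^{\alpha_{0,j}} \in \kk[\ini_\preceq f_j]$ for any $\alpha_0$ attaining the maximum, giving $\ini_\preceq R \subseteq \kk[\ini_\preceq f_j]$ as wanted.

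So suppose for contradiction that $\mathbf{d} \succ \deg_\preceq f$. Then the top part $g := \sum_{\mathcal{A}\alpha=\mathbf{d}} c_\alpha X^\alpha$ maps to $(\sum c_\alpha)x^{\mathbf{d}} = 0$ under $X_j \mapsto x^{\ub_j}$, so $g \in I_{\mathcal{A}}$ and is $\mathcal{A}$-homogeneous of degree $\mathbf{d}$. Writing $g = \sum_i h_i p_i$ with each $h_i$ $\mathcal{A}$-homogeneous of degree $\mathbf{d} - \mathbf{c}_i$ (possible since $g$ and the $p_i$ are homogeneous, by extracting the degree-$\mathbf{d}$ component of any lift), I would evaluate at $f$: the factor $h_i(f)$ has $\deg_\preceq \preceq \mathbf{d} - \mathbf{c}_i$, while the hypothesis that $p_i(f)$ subduces to a constant yields a representation $p_i(f) = \kappa_i + \sum_k d_{ik}\prod_j f_j^{\gamma_{ik,j}}$ in which every exponent satisfies $\mathcal{A}\gamma_{ik} \prec \mathbf{c}_i$; this is because the leading terms of the two monomials of the binomial $p_i(f)$ cancel, so $\deg_\preceq p_i(f) \prec \mathbf{c}_i$, and every monomial subtracted during its subduction is bounded by this. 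Multiplying out $h_i(f)\,p_i(f)$ then expresses $g(f) = \sum_{\mathcal{A}\alpha=\mathbf{d}} c_\alpha \prod_j f_j^{\alpha_j}$ as a $\kk$-combination of products $\prod_j f_j^{\delta_j}$ all of whose exponents satisfy $\mathcal{A}\delta \prec \mathbf{d}$. Substituting back into $f = g(f) + \sum_{\mathcal{A}\alpha \prec \mathbf{d}} c_\alpha \prod_j f_j^{\alpha_j}$ produces a representation of $f$ with strictly smaller value of $\max_\preceq \mathcal{A}\alpha$, contradicting the minimality of $\mathbf{d}$. I expect the main obstacle to be exactly this last bookkeeping: verifying that every monomial arising in $h_i(f)\,p_i(f)$ has $\mathcal{A}$-degree strictly below $\mathbf{d}$ (using $\mathbf{c}_i \succ 0$ together with the two bounds $\mathcal{A}\mathbf{e} = \mathbf{d}-\mathbf{c}_i$ and $\mathcal{A}\gamma_{ik} \prec \mathbf{c}_i$), and organizing the homogeneous lift $g = \sum_i h_i p_i$ so that these bounds are available simultaneously — the point where the choice of $\mathcal{A}$-homogeneous generators and the binomial cancellation in $p_i(f)$ are both essential.
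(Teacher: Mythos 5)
The paper offers no proof of Proposition~\ref{prop:sagbi criterion} at all --- it is imported verbatim from \cite[Corollary 11.5]{sturmfelsgrobner} --- and your blind reconstruction is correct and essentially the standard argument from that source: a Buchberger-style proof that takes a representation of $f \in \kk[\mathcal{F}]$ minimizing the top $\mathcal{A}$-degree $\mathbf{d}$, observes that when $\mathbf{d} \succ \deg_{\preceq} f$ the top slice lies in $I_{\mathcal{A}}$, and rewrites it below $\mathbf{d}$ using the subduced expressions of the $p_i(f_1,\ldots,f_s)$ (whose leading terms cancel, giving the bound $\mathcal{A}\gamma_{ik} \prec \mathbf{c}_i$), with the well-ordering of $\preceq$ forbidding infinite descent; all the steps you flag as delicate do check out, including $\mathbf{c}_i \succ \bm{0}$ (which needs only $f_j \notin \kk$). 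The one point deserving a remark is your opening normalization: replacing the given generators by their $\mathcal{A}$-homogeneous components is not quite "harmless" for the \emph{if} direction as stated, since the hypothesis concerns subduction of the originally given $p_i(f_1,\ldots,f_s)$, and your key degree bound comes from leading-term cancellation inside a single homogeneous $p_i$, which is not available componentwise if only an inhomogeneous combination is assumed subducible --- but as toric ideals carry canonical binomial generating sets, as Sturmfels's formulation intends, and as the paper applies the criterion only to binomial generators (in the third step of Theorem~\ref{thm:nonexistence of the monoid}), this is a standing convention rather than a gap.
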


% Bruns and Gubeladze の議論を参考にした
%Next, we introduce monoids and cones. 
We use the notation used in~\cite[Chapters 1 and 2]{bruns2009polytopes}.
In this paper, let $M$ be a submonoid of $(\intnonneg)^n$. 
For $\xb \in M$, we call $\xb$ \textit{irreducible} on $M$ if there are $\yb, \zb \in M$ with $\xb = \yb + \zb$, then either $\yb$ or $\zb$ must be $\bm{0}$. 
A monoid $M$ is non-finitely generated if and only if $M$ has infinitely many irreducible elements on $M$. 
A set $N$ with an operation $M \times N \rightarrow N$ is called an $M$-\textit{module} if 
\begin{equation*}
  (\ub + \vb) + \xb = \ub + (\vb + \xb) \quad \text{and} \quad \bm{0} + \xb = \xb \quad \;\text{ for any } \ub, \vb \in M \text{ and } \xb \in N.
\end{equation*}

It is convenient to observe $\realnonneg M \coloneqq \{\sum_{i=1}^{s}a_i\xb_i \mid \xb_i \in M, a_i \in \realnonneg, s \in \intnonneg\}$ 
for determining if $M$ is finitely generated. 
For $i=1, 2, \ldots, t$, let $\sigma_i$ be a linear form on $\mathbb{R}^n$ and let $H_i$, $H_i^{+}$ be linear hyperplanes and linear closed halfspaces such that
\begin{equation*}
  H_i \coloneqq \{\xb \in \mathbb{R}^n \mid \sigma_{i}(\xb) = 0\} \;\text{ and }\; H_i^{+} \coloneqq \{\xb \in \mathbb{R}^n \mid \sigma_{i}(\xb) \geq 0\}, 
\end{equation*}
respectively. 
Note that we also often use $\wb \in \RR^n$ to describe each linear form $\sigma$, i.e., $\sigma(\xb)=\langle \xb,\wb \rangle$, 
where $\langle \cdot, \cdot \rangle$ denotes the usual inner product of $\RR^n$. 
A \textit{polyhedral cone} is defined to be an intersection of finitely many linear closed halfspaces, i.e., $C$ is written as $C=\bigcap_{i=1}^tH_i^{+}$. 
%Here is a sufficient condition that a conical set becomes a cone.
%\begin{prop}
%  $C \subset \mathbb{R}^n$ is finitely generated conical set if and only if $C$ is a cone.
%\end{prop}
%\begin{proof}
%See~\cite[Theorem 1.15]{bruns2009polytopes}
%\end{proof}
Moreover, by using Proposition~\ref{prop:monoid and cone}, we can determine if a given monoid $M$ is finitely generated.
\begin{prop}[{\cite[Corollary 2.10]{bruns2009polytopes}}]\label{prop:monoid and cone}
  A monoid $M$ is finitely generated if and only if $\realnonneg M$ is a polyhedral cone.
\end{prop}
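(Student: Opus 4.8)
The plan is to avoid any analysis of the monomial order and instead derive a contradiction from the Hilbert function, which is insensitive both to the choice of $\preceq$ and to the passage to the initial algebra.

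First I would use that taking initial algebras preserves the Hilbert function. If $\ini_{\preceq}R=\kk[M]$, then for each degree $D$ the leading monomials of a $\kk$-basis of $R_D$ form a $\kk$-basis of $(\ini_{\preceq}R)_D$, so $\dim_{\kk}R_D=\dim_{\kk}\kk[M]_D$ for all $D$. Since $R$ is generated by finitely many homogeneous polynomials, it is a finitely generated graded $\kk$-algebra, so its Hilbert series is a rational function (Hilbert--Serre). Hence $H(t):=\sum_{D}(\dim_{\kk}\kk[M]_D)\,t^{D}$ is rational. Next I would compute this dimension explicitly. Because every generator $(1,n^2)$ has first coordinate $1$, an element $(a,b)\in M$ is exactly a sum of $a$ generators, so $(a,b)\in M$ if and only if $b$ is a sum of at most $a$ squares. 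Writing $\sigma(m)$ for the least number of squares summing to $m$, Lagrange's theorem gives $\sigma\le 4$ always, so for $a\ge 4$ we get $(a,D-a)\in M$ for all admissible $D$, while $a\in\{1,2,3\}$ are governed by the one-, two- and three-square theorems. This yields, for all large $D$,
\[
\dim_{\kk}\kk[M]_D=\#\{\,a\ge 1:\ \sigma(D-a)\le a\,\}=(D-3)+\varepsilon_D,\qquad \varepsilon_D=\mathbf 1[\sigma(D-1)\le 1]+\mathbf 1[\sigma(D-2)\le 2]+\mathbf 1[\sigma(D-3)\le 3],
\]
where $\varepsilon_D\in\{0,1,2,3\}$.

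I would then extract an eventual periodicity. Rationality of $H(t)$ forces $\sum_D\varepsilon_D t^{D}$ to be rational as well, so $(\varepsilon_D)$ satisfies a linear recurrence with constant coefficients. Being integer-valued and bounded, its state vector $(\varepsilon_D,\dots,\varepsilon_{D+k-1})$ ranges over a finite subset of $\intnonneg^{k}$, and hence by the pigeonhole principle $(\varepsilon_D)$ is eventually periodic. In particular, if it had eventual period, then $\sum_{D\le X}\varepsilon_D=cX+O(1)$ for some constant $c$.

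The final step, which is the heart of the matter, is to contradict this by a counting estimate. Summing the three indicators separately, $\sum_{D\le X}\varepsilon_D$ equals the number of perfect squares up to $X$ plus the counting functions of the sums of two and of three squares, up to bounded shifts, i.e. $\lfloor\sqrt X\rfloor+C_2(X)+C_3(X)+O(1)$. An elementary computation of the density of the exceptional set $\{4^k(8m+7)\}$ gives $C_3(X)=\tfrac56X+O(\log X)$, while $C_2(X)=o(X)$ because the sums of two squares have density zero. Matching the linear term pins $c=\tfrac56$, after which eventual periodicity would force $\lfloor\sqrt X\rfloor+C_2(X)=O(\log X)$; since $C_2\ge 0$ this fails for large $X$. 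The obstruction is precisely this unremovable $\sqrt X$ term produced by the (infinite but density-zero) set of perfect squares: it cannot be reconciled with the form $cX+O(1)$ demanded by a rational Hilbert series. Thus $\ini_{\preceq}R\ne\kk[M]$ for every finitely generated homogeneous $R$ and every monomial order $\preceq$. I expect the main technical obstacle to be exactly this estimate, namely separating the square-root contribution of the squares from the positive-density contribution of the three-square set and the vanishing-density contribution of the two-square set, for which the density-zero of sums of two squares and the exact $\tfrac56$ density of sums of three squares are the inputs needed.
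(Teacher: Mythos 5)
Your proposal does not prove the statement it was supposed to prove. The statement is Proposition~\ref{prop:monoid and cone}: a monoid $M$ is finitely generated if and only if $\realnonneg M$ is a polyhedral cone. This is a general structural equivalence about submonoids of $(\intnonneg)^n$ and the cones they span; a proof would have to address both implications --- for the ``only if'' direction, that the cone $\realnonneg M$ spanned by finitely many generators is an intersection of finitely many closed halfspaces (a Minkowski--Weyl type argument), and for the ``if'' direction, how polyhedrality of $\realnonneg M$ forces $M$ to have finitely many generators (a Gordan-type argument). Nothing in your text engages with either direction: you never mention $\realnonneg M$, halfspaces, faces, or polyhedrality at all. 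Instead, what you have written is an argument for Theorem~\ref{thm:nonexistence of the monoid}, the paper's second main result about the specific monoid generated by $\{(1,n^2)\mid n\in\intnonneg\}$. Note also that the paper itself offers no proof of Proposition~\ref{prop:monoid and cone}; it is quoted directly from Bruns--Gubeladze, so even a correct proof of it would be new content rather than a match for an internal argument --- but yours is not such a proof. As an answer to the question posed, this is not a gap in one step; it is a miss of the entire target.

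For what it is worth, read instead as a blind proof sketch of Theorem~\ref{thm:nonexistence of the monoid}, your argument is genuinely different from the paper's and looks essentially viable: the paper proceeds by pinning down the reduced SAGBI basis elements as $g_i=y^{i^2}(x+ay)$ via explicit polynomial identities and then contradicting the uniqueness of the reduced SAGBI basis, whereas you use invariance of the Hilbert function under passing to the initial algebra, rationality of the Hilbert series of a finitely generated graded algebra, the computation $\dim_{\kk}\kk[M]_D=(D-3)+\varepsilon_D$ via Lagrange's four-square theorem, eventual periodicity of bounded integer linear-recurrence sequences, and the incompatibility of the $\sqrt{X}$ contribution of the squares with the density $\tfrac{5}{6}$ of three-square numbers. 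That route has the merit of being insensitive to the monomial order and to the fine structure of the SAGBI basis, at the cost of importing analytic number theory (Landau's density-zero theorem for sums of two squares, the Legendre--Gauss three-square theorem). But none of this can substitute for the monoid--cone equivalence you were actually asked to establish.
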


A \textit{face} $F$ of $C$ is a non-empty intersection of a linear hyperplane $H = \{\xb \in \mathbb{R}^n \mid \sigma(\xb) = 0\}$ and $C$ satisfying $C \subset H^+$. 
Namely,  
\begin{equation*}
  F \coloneqq H \cap C = \{\xb \in C \mid \sigma(\xb) = 0\} \ne \emptyset. 
\end{equation*}
For a polyhedral cone $C = \bigcap_{i=1}^{t}H_i^{+}$, we define 
\begin{equation*}
  C^{\circ} \coloneqq \{\xb \in C \mid \sigma_{i}(\xb) > 0 \text{ for each }i\}.
\end{equation*}
Note that for all $\xb \in C$ and $\xb' \in C^{\circ}$, we have $\xb + \xb' \in C^{\circ}$. 

\bigskip

\section{Examples of monoids and subalgebras}\label{Examples of monoids and subalgebras} % 例の構成
%On the first step of our study, we execute computer experiments and collect various non-finitely generated monoids. 
In this section, various non-finitely generated monoids are generalized (Lemma~\ref{lem:def monoid}) 
and we construct finitely generated subalgebras that correspond to monoids (Theorem~\ref{thm:fg subalg}). 

We found these examples through computational experiments by using the package 
\href{https://macaulay2.com/doc/Macaulay2/share/doc/Macaulay2/SubalgebraBases/html/index.html}{\tt{"SubalgebraBases"}}~\cite{SubalgebraBasesSource}
on \href{https://macaulay2.com/}{\tt{Macaulay2}}~\cite{M2}.  %for our computer experiments.
On our experiments, we focused on subalgebras generated by one homogeneous binomial $x^{\vb_1}+x^{\vb_2}$ and 
$t$ monomials $x^{\ub_1}, x^{\ub_2}, \ldots, x^{\ub_t}$. 
%Thanks to Lemma~\ref{prop:bino and mono} below, we need one binomial $x^{v_1}+x^{v_2}$ and at least two monomials
%$x^{u_1}, x^{u_2}, \ldots, x^{u_t} (t \geq 2)$ for generators of subalgebras with infinitely generated initial algebras.

First, we discuss the case $t=1$. 
\begin{prop}\label{prop:bino and mono}
Let $\vb_1,\vb_2 \in (\intnonneg)^2$ be linearly independent. 
For any $\ub \in (\intnonneg)^2$, the $\kk$-subalgebra $R = \kk[x^{\vb_1}+x^{\vb_2}, x^\ub]$ has a finite SAGBI basis. 
\end{prop}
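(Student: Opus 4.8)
The plan is to exhibit a single monomial order under which the two given generators already form a SAGBI basis, so that $\{x^{\vb_1}+x^{\vb_2},\,x^{\ub}\}$ itself is a finite SAGBI basis. Write $f_1=x^{\vb_1}+x^{\vb_2}$ and $f_2=x^{\ub}$. The guiding observation is that if the exponent vectors of the two initial monomials are linearly independent over $\QQ$, then the toric ideal $I_{\mathcal{A}}$ of the matrix $\mathcal{A}$ having these two vectors as its columns vanishes: a binomial $X_1^{m_1}X_2^{m_2}-X_1^{n_1}X_2^{n_2}$ lies in $I_{\mathcal{A}}$ exactly when $(m_1-n_1)$ and $(m_2-n_2)$ give a vanishing integer combination of the two columns, which by independence forces $m_i=n_i$; since toric ideals are generated by binomials, $I_{\mathcal{A}}=(0)$. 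With $I_{\mathcal{A}}=(0)$ there are no generators to subduce, so by Proposition~\ref{prop:sagbi criterion} the criterion holds vacuously and $\{f_1,f_2\}$ is a SAGBI basis.

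Thus the task reduces to producing a term order under which $\ini_{\preceq}{f_1}$ is not a rational multiple of $\ub$. Here $\ini_{\preceq}{f_1}\in\{x^{\vb_1},x^{\vb_2}\}$, and since $\vb_1,\vb_2$ are linearly independent, $\ub$ is parallel to at most one of them (and if $\ub=\bm{0}$ then $f_2\in\kk$ and $\{f_1\}$ is trivially a finite SAGBI basis). Recall the standard fact that a term order realizing $\vb_1\succ\vb_2$ exists precisely when $\vb_2\not\geq\vb_1$ coordinatewise, and symmetrically for $\vb_2\succ\vb_1$. Hence if $\ub\not\parallel\vb_1$ I would try to realize $\ini_{\preceq}{f_1}=x^{\vb_1}$, and if $\ub\not\parallel\vb_2$ I would try $\ini_{\preceq}{f_1}=x^{\vb_2}$; at least one of these applies. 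When $\vb_1,\vb_2$ are coordinatewise incomparable --- which is automatic in the homogeneous situation emphasized in this paper, since then $\vb_1,\vb_2$ lie on a common line of constant coordinate sum --- both comparisons are realizable, the required order always exists, and the clean argument above finishes the proof.

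The step I expect to be the main obstacle is the remaining configuration, in which $\ub$ is parallel to a coordinatewise-dominating $\vb_i$; then every admissible order forces $\ini_{\preceq}{f_1}=x^{\vb_i}$, the two initial exponents become dependent, and $\{f_1,f_2\}$ fails to be a SAGBI basis (indeed cancellation among the leading terms of $f_1^{\,p}-f_2^{\,q}$ produces genuinely new initial monomials, the very mechanism behind Theorem~\ref{thm:fg subalg}). To handle it I would pass to the two-variable polynomial subring $\kk[z,\,x^{\vb_2}]$ with $z=x^{\wb}$, where $\wb$ is the primitive lattice vector on the common ray of $\vb_i$ and $\ub$; writing $\vb_i=a\wb$ and $\ub=c\wb$, the algebra becomes $R=\kk[z^{a}+x^{\vb_2},\,z^{c}]$. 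Grading this ring by the weight that makes the binomial homogeneous, I would compute the dimension of each graded piece and the associated family of leading exponents, and then argue that the degree monoid $\deg_{\preceq}{R}$ is a finitely generated (saturated) monoid, generated by $\vb_i$ together with the exponent of the leading term of $f_1^{\,c'}-f_2^{\,a'}$, where $a'=a/\gcd(a,c)$ and $c'=c/\gcd(a,c)$. Finite generation of $\deg_{\preceq}{R}$ then yields a finite SAGBI basis, which one may also confirm via Proposition~\ref{prop:monoid and cone}.
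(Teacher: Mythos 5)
Your first case is sound and coincides with the paper's: when the two initial exponents are $\QQ$-linearly independent, the toric ideal $I_{\mathcal{A}}$ vanishes and Proposition~\ref{prop:sagbi criterion} applies vacuously (the paper phrases the same thing as ``no cancellation of initial terms can occur''). Note, however, that the paper treats the monomial order as given and arbitrary --- its ``without loss of generality'' merely swaps the labels of $\vb_1$ and $\vb_2$ --- whereas you try to \emph{choose} an order making the initial exponents independent. That establishes at best the existential-in-the-order version of the statement, and it pushes all of the actual content of the proposition into what you call the ``remaining configuration''.

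That remaining configuration --- $\deg_{\preceq}(x^{\vb_1}+x^{\vb_2})$ parallel to $\ub$, say $m\vb_1=\ell\ub$ --- is exactly where the proposition is nontrivial, and there your proposal stops being a proof: ``I would compute \dots\ and then argue \dots'' is a plan, not an argument, and the endpoint you announce is moreover false in general. The degree monoid need not be generated by $\vb_i$ together with the single new leading exponent: take $\vb_1=(4,2)$, $\vb_2=(1,0)$, $\ub=(2,1)$ (here $\vb_1$ dominates $\vb_2$ coordinatewise and $\ub\parallel\vb_1$, so this is your forced case). Then $a=2$, $c=1$, and $f_1^{c'}-f_2^{a'}=(x^4y^2+x)-(x^2y)^2=x$ has leading exponent $(1,0)$; the monoid generated by $(4,2)$ and $(1,0)$ does not contain $\ub=(2,1)\in\deg_{\preceq}R$. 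The paper closes this case in a few lines without any of the graded-dimension machinery: it forms $f=\tfrac{1}{m}\bigl((x^{\vb_1}+x^{\vb_2})^m-(x^{\ub})^{\ell}\bigr)$, observes that $\ini_{\preceq}f=x^{(m-1)\vb_1+\vb_2}$, and notes that $(m-1)\vb_1+\vb_2$ is $\QQ$-linearly independent of $\ub$ precisely because $\ub\parallel\vb_1$ while $\vb_2\not\parallel\vb_1$; hence there are no further toric relations and the finite set $\{x^{\vb_1}+x^{\vb_2},\,f,\,x^{\ub}\}$ is a SAGBI basis. You correctly identified the relevant polynomial $f_1^{c'}-f_2^{a'}$, but you are missing the one-line independence observation that actually finishes the argument; supplying it (for the given order, not a chosen one) would repair the proof.
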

\begin{proof}
We may assume $\vb_1 \succeq \vb_2$ without loss of generality. 

Let $\vb_1, \ub$ be linearly independent over $\mathbb{Q}$. We consider a linear relation 
\begin{equation*}
    a_1\vb_1 + a_2\ub = b_1\vb_1 + b_2\ub, 
\end{equation*}
where $a_1, a_2, b_1, b_2 \in \intnonneg$. Since $\vb_1, \ub$ are linearly independent, we have $a_1 = b_1$ and $a_2 = b_2$. 
Thus, any cancellation of initial terms in $R$ cannot occur and we obtain that $\{x^{\vb_1}+x^{\vb_2}, x^\ub\}$ is a SAGBI basis of $R$. 

Let $m\vb_1 = \ell \ub$ with some positive integers $m, \ell$. Then, we can obtain a polynomial in $R$ as follows: 
\begin{align*}
f &\coloneqq \frac{1}{\binom{m}{1}}((x^{\vb_1} + x^{\vb_2})^{m} - (x^\ub)^\ell) \\
  &= x^{(m-1)\vb_1+\vb_2} + \frac{\binom{m}{2}}{\binom{m}{1}}x^{(m-2)\vb_1+2\vb_2} + \cdots + \frac{1}{\binom{m}{1}}x^{m\vb_2}.
\end{align*}
Now, we prove $\{f, x^\ub\}$ is a SAGBI basis of $R$. Similarly to the previous case, we consider the equality 
\begin{equation*}
    a_1((m-1)\vb_1 + \vb_2) + a_2\ub = b_1((m-1)\vb_1 + \vb_2) + b_2\ub. 
\end{equation*}
Since $\vb_1$ and $\ub$ are linearly dependent while $\vb_1$ and $\vb_2$ are linearly independent, we obtain $a_1 = b_1$. 
Therefore we obtain $a_2 = b_2$. Thus, there is no relation between $(m-1)\vb_1 + \vb_2$ and $\ub$. Hence, $\{f, x^\ub\}$ becomes a SAGBI basis of $R$. 
\end{proof}

We can similarly prove the following. 
\begin{prop}\label{prop:bino and bino}
Let $\vb_1,\vb_2 \in (\intnonneg)^2$ be linearly independent and let $\ub_1,\ub_2 \in (\intnonneg)^2$ be linearly independent. 
Then the $\kk$-subalgebra $R = \kk[x^{\vb_1}+x^{\vb_2}, x^{\ub_1}+x^{\ub_2}]$ has a finite SAGBI basis. 
\end{prop}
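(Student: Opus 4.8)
The plan is to mirror the proof of Proposition~\ref{prop:bino and mono} and apply the SAGBI basis criterion of Proposition~\ref{prop:sagbi criterion}. After relabeling within each binomial I may assume $\vb_1 \succeq \vb_2$ and $\ub_1 \succeq \ub_2$, so that the initial terms are $\ini_\preceq(x^{\vb_1}+x^{\vb_2}) = x^{\vb_1}$ and $\ini_\preceq(x^{\ub_1}+x^{\ub_2}) = x^{\ub_1}$. Writing $g_1 = x^{\vb_1}+x^{\vb_2}$ and $g_2 = x^{\ub_1}+x^{\ub_2}$, the relevant matrix of initial exponents is $\mathcal{A} = (\vb_1, \ub_1)$, and everything hinges on the toric ideal $I_{\mathcal{A}}$. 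I would split into two cases according to whether $\vb_1$ and $\ub_1$ are linearly dependent over $\QQ$.

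If $\vb_1$ and $\ub_1$ are linearly independent over $\QQ$, then the monomials $X_1^a X_2^b \mapsto x^{a\vb_1 + b\ub_1}$ are pairwise distinct, so $I_{\mathcal{A}} = (0)$. By Proposition~\ref{prop:sagbi criterion} there is nothing to subduce, and $\{g_1, g_2\}$ is already a finite SAGBI basis; this is exactly the linearly independent case of Proposition~\ref{prop:bino and mono}. The substantive case is when $\vb_1$ and $\ub_1$ are linearly dependent, say $m\vb_1 = \ell\ub_1$ with coprime positive integers $m, \ell$. Here the lattice of relations among $\vb_1, \ub_1$ is generated by $(m, -\ell)$, so $I_{\mathcal{A}} = (X_1^m - X_2^\ell)$, and by Proposition~\ref{prop:sagbi criterion} it suffices to subduce the single binomial $h \coloneqq g_1^m - g_2^\ell$.

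The key observation is that the leading terms $x^{m\vb_1}$ and $x^{\ell\ub_1}$ coincide and cancel in $h$, while every remaining monomial of $g_1^m$ (resp.\ $g_2^\ell$) has exponent $(m-k)\vb_1 + k\vb_2$ with $k \geq 1$ (resp.\ $(\ell-j)\ub_1 + j\ub_2$ with $j \geq 1$), hence lies strictly off the line $\RR\vb_1 = \RR\ub_1$. Consequently either $h = 0$, in which case the toric generator subduces to $0$ and $\{g_1, g_2\}$ is a SAGBI basis, or $h \neq 0$, and then $\deg_\preceq h$ lies off $\RR\vb_1$, so in particular $\deg_\preceq h \notin \intnonneg\vb_1 + \intnonneg\ub_1$ and the subduction of $h$ against $\{g_1, g_2\}$ halts at its first step. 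In the latter situation I would enlarge the candidate basis to $\{g_1, g_2, h\}$ and re-apply the criterion: with initial exponents $\{\vb_1, \ub_1, \deg_\preceq h\}$ and $\deg_\preceq h$ off the common line, the relation lattice is unchanged, so the toric ideal is still $(X_1^m - X_2^\ell)$; subducing $g_1^m - g_2^\ell = h$ against the enlarged set now cancels against the new generator $h$ and terminates in $0 \in \kk$. Hence $\{g_1, g_2, h\}$ is a finite SAGBI basis.

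I expect the main technical point to be the bookkeeping in the dependent case, namely confirming that all surviving monomials of $h$ genuinely lie off the line $\RR\vb_1$ so that $\deg_\preceq h$ falls outside $\intnonneg\vb_1 + \intnonneg\ub_1$ and introduces no unexpected toric relation after enlargement. This includes the degenerate subcase in which a monomial of $g_1^m$ and one of $g_2^\ell$ share an off-line exponent and partially cancel; such cancellation can only lower $\deg_\preceq h$ but cannot move it back onto the line, so the argument is unaffected. The clean reason the whole construction stays finite, in contrast to the non-finitely generated phenomenon exploited in Theorem~\ref{thm:fg subalg}, is that with only two binomials all new initial exponents are confined to a single off-line direction, preventing any accumulation of irreducible generators.
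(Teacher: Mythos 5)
Your proof is correct, and its core is the same as the paper's: split on whether $\vb_1$ and $\ub_1$ are linearly dependent, observe that in the independent case the toric ideal is zero, and in the dependent case reduce everything to the single relation $X_1^m-X_2^\ell$ and the polynomial $h=g_1^m-g_2^\ell$, whose leading terms cancel and whose surviving monomials all lie off the line $\RR\vb_1$. Where you diverge is the final packaging, and your choice is the better one. The paper replaces one of the two original generators by $f=g_1^m-g_2^\ell$ and asserts that $\{f,g_2\}$ or $\{f,g_1\}$ (chosen according to which second term is $\preceq$-larger) is a SAGBI basis; but when $m>1$ the set $\{f,g_2\}$ need not even generate $R$ (for $R=\kk[x^2+y^2,x^3+y^3]$ one has $m=3$, $\ell=2$, and neither $\{f,g_1\}$ nor $\{f,g_2\}$ generates $R$, by a degree count), so the paper's two-element conclusion is flawed as stated. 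You instead keep both $g_1,g_2$ and append $h$, check that the toric ideal of the enlarged exponent set $(\vb_1,\ub_1,\deg_{\preceq}h)$ is still the principal ideal $(X_1^m-X_2^\ell)$ because $\deg_{\preceq}h$ is off the common line, and note that $g_1^m-g_2^\ell=h$ now subduces to $0$ in one step; this correctly yields the finite SAGBI basis $\{g_1,g_2,h\}$ in all cases, including $h=0$ and the degenerate partial-cancellation subcase, both of which you handle explicitly and the paper glosses over. In short: same idea, but your execution of the last step is the one that actually closes the argument.
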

\begin{proof}
We may assume $\vb_1 \succeq \vb_2$ and $\ub_1 \succeq \ub_2$ without loss of generality. 
If $\vb_1, \ub_1$ are linearly independent, then the proof is the same as in Proposition~\ref{prop:bino and mono}. 
  
Next, we assume that $m\vb_1 = \ell \ub_1$ with some positive integers $m, \ell$. 
If $m = \ell = 1$ and $\vb_2 = \ub_2$, then $\{x^{\vb_1}+x^{\vb_2}\}$ is a SAGBI basis of $R$. 
We set $\vb_2 \ne \ub_2$, and let 
\begin{align*}
    f &\coloneqq (x^{\vb_1}+x^{\vb_2})^m - (x^{\ub_1}+x^{\ub_2})^\ell \\
    &= \binom{m}{1}x^{(m-1)\vb_1+\vb_2} + \cdots + x^{m\vb_2} - \left(\binom{\ell}{1}x^{(\ell-1)\ub_1+\ub_2} + \cdots + x^{\ell\ub_2}\right).
  \end{align*}
  In the case $x^{(m-1)\vb_1+\vb_2} \succeq x^{(l-1)\ub_1+\ub_2}$, $\{f, x^{\ub_1}+x^{\ub_2}\}$ is a SAGBI basis of $R$.
  In the other case, $\{f, x^{\vb_1}+x^{\vb_2}\}$ is a SAGBI basis of $R$. The proof is the same as Proposition~\ref{prop:bino and mono}.
\end{proof}

We enumerate examples found in our experiments. These examples can be regarded as a generalization of~\cite[1.20]{robbianosubalgebra}. 
All monoids corresponding to these examples are non-finitely generated monoids by Lemma~\ref{lem:def monoid}.

\begin{lem}\label{lem:def monoid}
Let $\vb_1,\vb_2 \in (\intnonneg)^2$ be linearly independent and let $\preceq$ be a monomial order with $\vb_1 \succeq \vb_2$. 
Let $C$ be a cone generated by $\vb_1, \vb_2$. 
Fix $\ub_1, \ub_2, \ldots, \ub_s \in (\intnonneg)^2 \cap C^{\circ}$, let $N$ be the monoid generated by $\vb_2$, 
and let $L$ be the $N$-module generated by $\ub_1, \ub_2, \ldots, \ub_s$. We define $M$ by setting the monoid generated by $\{\vb_1\} \cup L$.  
Then $M$ is not finitely generated. 
\end{lem}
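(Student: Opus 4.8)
The plan is to apply Proposition~\ref{prop:monoid and cone}: it suffices to show that $\realnonneg M$ fails to be a polyhedral cone. Since every polyhedral cone is closed (being a finite intersection of closed halfspaces), I will prove the stronger statement that $\realnonneg M$ is not a closed subset of $\RR^2$, by exhibiting a point lying in its closure but not in $\realnonneg M$ itself.

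First I would record that, because $M$ is the monoid generated by $\{\vb_1\}\cup L$ and $L=\{k\vb_2+\ub_i : k\in\intnonneg,\ 1\le i\le s\}$, one has
\[
\realnonneg M=\realnonneg\bigl(\{\vb_1\}\cup L\bigr),
\]
i.e. the set of finite $\realnonneg$-combinations of $\vb_1$ and the vectors $k\vb_2+\ub_i$. Since $\vb_1,\vb_2$ are linearly independent, $C=\realnonneg\{\vb_1,\vb_2\}$ is a two-dimensional cone, and I would fix the linear form $\sigma$ (given by some $\wb\in\RR^2$) cutting out the facet of $C$ through $\vb_2$, normalized so that $\sigma(\vb_2)=0$, $\sigma(\vb_1)>0$, and $\sigma\ge 0$ on all of $C$; thus $C^{\circ}\subset\{\sigma>0\}$.

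The key computation is that every generator of $M$ is strictly positive under $\sigma$. Indeed $\sigma(\vb_1)>0$, and for each generator coming from $L$,
\[
\sigma(k\vb_2+\ub_i)=k\,\sigma(\vb_2)+\sigma(\ub_i)=\sigma(\ub_i)>0,
\]
the last inequality because $\ub_i\in C^{\circ}$. Consequently, for any nonzero $\xb=\sum_j\lambda_j g_j\in\realnonneg M$ (with $\lambda_j\ge 0$ and each $g_j$ a generator) we have $\sigma(\xb)=\sum_j\lambda_j\sigma(g_j)>0$, since at least one $\lambda_j>0$ and every $\sigma(g_j)>0$. In particular $\vb_2$, which satisfies $\sigma(\vb_2)=0$ and $\vb_2\ne\bm{0}$, does not belong to $\realnonneg M$. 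Finally I would produce $\vb_2$ as a limit of points of $\realnonneg M$: for each $k\ge 1$ the vector $k\vb_2+\ub_1$ lies in $L\subset M$, so $\tfrac1k(k\vb_2+\ub_1)\in\realnonneg M$, and
\[
\tfrac1k(k\vb_2+\ub_1)=\vb_2+\tfrac1k\ub_1\longrightarrow\vb_2\quad(k\to\infty).
\]
Thus $\vb_2\in\overline{\realnonneg M}\setminus\realnonneg M$, so $\realnonneg M$ is not closed, hence not polyhedral, and Proposition~\ref{prop:monoid and cone} yields that $M$ is not finitely generated.

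The only genuinely delicate point is the bookkeeping that turns the generators of $M$ into the explicit list $\{\vb_1\}\cup\{k\vb_2+\ub_i\}$ together with the verification that none of them lies on the ray $\realnonneg\vb_2$; this is exactly where the hypothesis $\ub_i\in C^{\circ}$ enters, and it is what makes the boundary ray through $\vb_2$ approachable but unreachable. An alternative, more hands-on route would be to show directly that infinitely many of the $k\vb_2+\ub_1$ are irreducible in $M$ and invoke the criterion that a monoid is non-finitely generated if and only if it has infinitely many irreducible elements; but the cone argument above avoids the case analysis needed to certify irreducibility.
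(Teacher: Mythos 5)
Your proof is correct, but it takes a genuinely different route from the paper's. The paper argues combinatorially inside the monoid: choosing $\ub_1$ to minimize $\langle \wb_2,\cdot\rangle$ among the $\ub_i$, it shows that every element of $L_1=\{\ub_1+m\vb_2 \mid m\in\intnonneg\}$ is irreducible in $M$ (via a case analysis on whether a summand from $L$ or only multiples of $\vb_1$ can appear in a putative decomposition, using both supporting linear forms $\wb_1$ and $\wb_2$), and then invokes the criterion that a monoid with infinitely many irreducibles is not finitely generated. You instead pass to $\realnonneg M$ and show it is not closed --- the boundary ray through $\vb_2$ is approached by $\vb_2+\tfrac1k\ub_1$ but never reached, since every nonzero element of $\realnonneg M$ is strictly positive under the linear form $\sigma=\langle\wb_2,\cdot\rangle$ --- and then apply the easy direction of Proposition~\ref{prop:monoid and cone} (finitely generated implies polyhedral, hence closed). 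All the steps check out: the generators of $M$ are exactly $\vb_1$ and the vectors $k\vb_2+\ub_i$, each with $\sigma>0$ because $\ub_i\in C^\circ$, and $\vb_2\neq\bm{0}$ by linear independence. Your argument is shorter and sidesteps the irreducibility case analysis entirely, at the cost of yielding less structural information: the paper's proof actually exhibits an explicit infinite set of irreducible elements of $M$ (which is consonant with the explicit infinite SAGBI bases appearing later), whereas yours only certifies non-finite generation.
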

\begin{proof}
Let $\wb_1, \wb_2 \in \mathbb{R}^2$ such that 
\begin{align*}
          \langle \wb_1, \vb_1 \rangle &= 0 \text{ and } \langle \wb_1, \xb \rangle \geq 0 \quad (\forall \xb \in C);  \\
          \langle \wb_2, \vb_2 \rangle &= 0 \text{ and } \langle \wb_2, \xb \rangle \geq 0 \quad (\forall \xb \in C).
\end{align*}
In other words, $\wb_1$ and $\wb_2$ are chosen as they define the facets $\realnonneg \vb_1$ and $\realnonneg \vb_2$ of $C$, respectively. 
Since $\ub_1, \ub_2, \ldots, \ub_s \in C^{\circ}$, we have $\langle \wb_2, \ub_i \rangle > 0$ for each $i$. 
Let, say, $\ub_1$ attain $\langle \wb_2, \ub_1 \rangle \leq \langle \wb_2, \ub_i \rangle$ for each $i$. 
Let $L_1 \coloneqq \{\ub_1 + m\vb_2 \mid m \in \intnonneg\}$ be the $N$-module generated by $\ub_1$. 
Then $L_1$ is an infinite subset of $M$. 
In what follows, we prove that all elements of $L_1$ are irreducible on $M$. 
On the contrary, suppose that $$\ub_1 + m\vb_2 = \xb_1 + \xb_2$$ 
for some $m \in \intnonneg$ and $\xb_1, \xb_2 \in M \backslash \{\bm{0}\}$. 
Then $\xb_1$ and $\xb_2$ can be written as follows: 
\begin{equation}\label{eqn:sum of x_1 and x_2}
    \xb_1 = \sum a_{s} \yb_s \;\;\text{and}\;\; \xb_2 = \sum b_{t} \zb_t, 
\end{equation}
where $\yb_s, \zb_t \in \{\vb_1\} \cup L$ and $a_s, b_t \in \intnonneg$. 
Since generators of $M$ are a subset of $C \backslash \realnonneg \vb_2$, we have $\langle \wb_2, \xb \rangle > 0$ for all $\xb \in \{\vb_1\} \cup L$. 
If $\ub_i + m'\vb_2 \in L$ appears in the summand of $\xb_1$ or $\xb_2$ of \eqref{eqn:sum of x_1 and x_2}, say, in $\xb_1$, 
then we obtain that $\langle \wb_2, \xb_1 \rangle \geq \langle \wb_2, \ub_1 \rangle$. 
Since $\langle \wb_2, \xb_2 \rangle > \langle \wb_2,  \vb_2 \rangle = 0$, 
we have $\langle \wb_2, \xb_1 + \xb_2\rangle > \langle \wb_2, \ub_1 + m\vb_2\rangle$, a contradiction to $\ub_1 + m\vb_2 = \xb_1 + \xb_2$. 
Thus, no elements in $L$ appear in the summand of \eqref{eqn:sum of x_1 and x_2}. 
Hence, $\xb_1 + \xb_2$ is a positive integer multiple of $\vb_1$. We can rewrite it as like 
\begin{equation*}
    \ub_1 + m\vb_2 = \xb_1 + \xb_2 = \ell\vb_1 
\end{equation*}
with some positive integer $\ell$. However, by applying $\langle \wb_1, - \rangle$ to both sides of these equations, we obtain that 
\begin{equation*}
    0 < \langle \wb_1, \ub_1 + m\vb_2\rangle = \langle \wb_1, \ell \vb_1\rangle = 0, 
\end{equation*}
a contradiction. 

Therefore, we conclude that all elements of $L_1$ are irreducible on $M$, implying the non-finite generation of $M$. 
\end{proof}

Now, we provide a family of examples of finitely generated $\kk$-algebras whose initial algebras are equal to $\kk[M]$ 
with $M$ defined in Lemma~\ref{lem:def monoid}. This is the first main theorem of this paper. 
\begin{thm}\label{thm:fg subalg}
%Let $\vb_1,\vb_2 \in (\intnonneg)^2$ be linearly independent. 
%Assume the existence of a monomial order $\preceq$ with $\vb_1 \succeq \vb_2$. 
%Let $C$ be a cone generated by $\vb_1, \vb_2$. Fix $\ub_1, \ub_2, \ldots, \ub_s \in (\intnonneg)^2 \cap C^{\circ}$. 
%Let $N$ be a monoid generated by $\vb_2$, and let $L$ be the $N$-module generated by $\ub_1, \ub_2, \ldots, \ub_s$. 
%We define $M$ by setting the monoid generated by $\{\vb_1\} \cup L$. 
Work with the same notation as in Lemma~\ref{lem:def monoid}. 
Let $R$ be the $\kk$-algebra generated by $$G \coloneqq \{x^{\vb_1}+x^{\vb_2}\} \cup \{x^\ub \mid \ub \in L\}.$$
Then $R$ is finitely generated. Moreover, given a monomial order $\preceq$ with $\vb_1 \succeq \vb_2$, we have $\ini_{\preceq}{R} = \kk[M]$. 
In particular, $G$ is a SAGBI basis of some $R$ consisting of infinitely many polynomials (most of which are monomials).  
\end{thm}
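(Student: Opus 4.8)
The plan is to prove the two assertions separately: finite generation of $R$, and the identity $\ini_{\preceq}R=\kk[M]$ (from which ``$G$ is a SAGBI basis'' is immediate, since $\kk[\ini_\preceq g : g\in G]=\kk[\{x^{\vb_1}\}\cup\{x^\ub:\ub\in L\}]=\kk[M]$).

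For finite generation I would sandwich $R$ between two tractable rings. Put $A:=\kk[b,x^{\ub_1},\ldots,x^{\ub_s}]\subseteq R$ with $b=x^{\vb_1}+x^{\vb_2}$, and let $S_C:=C\cap(\intnonneg)^2$, a finitely generated normal monoid by Gordan's lemma, so that $A\subseteq R\subseteq\kk[S_C]$. The goal is that $\kk[S_C]$ is a finite $A$-module; then, as $A$ is a finitely generated (hence Noetherian) $\kk$-algebra, the intermediate $A$-module $R$ is finite, so $R$ is a finitely generated $\kk$-algebra. The decisive point is that $x^{\vb_1}$ and $x^{\vb_2}$ are integral over $A$. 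First I would show $z:=x^{\vb_1+\vb_2}$ is integral over $A$: writing $\xi=x^{\vb_1},\ \eta=x^{\vb_2}$, we have $\xi+\eta=b\in A$ and $\xi\eta=z$, while for $d=|\det(\vb_1,\vb_2)|$ and a fixed $i$ one has $(x^{\ub_i})^d=\xi^{a}\eta^{c}\in A$ with integers $a,c\ge 1$ (the inequality $a,c\ge 1$ holds precisely because $\ub_i\in C^{\circ}$). Symmetrizing against the conjugate gives $\xi^{a}\eta^{c}\cdot\xi^{c}\eta^{a}=z^{a+c}$ and $\xi^{a}\eta^{c}+\xi^{c}\eta^{a}\in\kk[b,z]$ of $z$-degree strictly below $a+c$; eliminating the conjugate yields a monic relation $z^{a+c}-(\cdots)=0$ for $z$ over $A$. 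Hence $\xi,\eta$ are roots of $T^2-bT+z$, so integral over $A[z]$ and thus over $A$. Finally every $x^{\cb}$ with $\cb\in S_C$ satisfies $(x^{\cb})^{d}\in\kk[x^{\vb_1},x^{\vb_2}]$ (because $d\cb\in\intnonneg\vb_1+\intnonneg\vb_2$), so $\kk[S_C]$ is finite over $\kk[x^{\vb_1},x^{\vb_2}]$, which is integral over $A$; transitivity completes this step.

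For $\ini_{\preceq}R=\kk[M]$ it suffices to prove $\deg_{\preceq}R=M$. The inclusion $M\subseteq\deg_{\preceq}R$ follows from multiplicativity of $\deg_{\preceq}$ on products of generators, using $\deg_{\preceq}b=\vb_1$ and $\deg_{\preceq}x^{\ub}=\ub$. For the reverse inclusion I would invoke homogeneity of $b$ (so $\vb_1,\vb_2$ share a degree) to reduce to homogeneous $f\in R$ and expand $f$ in the spanning set $\{b^{k}x^{\wb}:k\ge 0,\ \wb\in\langle L\rangle\}$, where $\langle L\rangle$ is the monoid generated by $L$; split $f=P(b)+g$, with $P(b)=\sum_k c_k b^{k}$ the $\wb=\bm 0$ terms and $g$ the $\wb\ne\bm 0$ terms. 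Two facts drive the conclusion: (i) every monomial of $b^{k}x^{\wb}$ with $\wb\ne\bm 0$ lies in $M$, since it equals $l\vb_1+(\wb+(k-l)\vb_2)$ and $\wb+(k-l)\vb_2\in\langle L\rangle$; hence $\supp g\subseteq M$. (ii) Within a graded component, $\preceq$ ranks monomials by their $\vb_1$-coordinate, because the difference of two equidegree monomials is a multiple of $\vb_1-\vb_2\succ\bm 0$; so $\deg_{\preceq}f$ is the monomial of largest $\vb_1$-coordinate. Homogeneity forces $P(b)=c_K b^{K}$ to be a single power, whose top monomial $K\vb_1\in M$ attains the maximal possible $\vb_1$-coordinate, and this monomial gets no contribution from $g$ (all monomials of $g$ have positive $\vb_2$-coordinate). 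Thus $\deg_{\preceq}f=K\vb_1\in M$ if $c_K\ne 0$, and $\deg_{\preceq}f\in\supp g\subseteq M$ if $c_K=0$.

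The main obstacle is exactly the reverse inclusion $\deg_{\preceq}R\subseteq M$: a priori, cancellation of leading terms among the infinitely many products $b^{k}x^{\wb}$ could expose a new leading monomial $l\vb_1+m\vb_2$ with $m\ge 1$, and such a monomial need not lie in $M$. The substance of the argument is that cancellation cannot reach the decisive monomial: the sole source of monomials that can escape $M$ is the pure power $b^{K}$, and its $\preceq$-maximal monomial $K\vb_1$ is never met by any term carrying a genuine $x^{\ub}$ factor. Isolating this dichotomy in the $\vb_2$-coordinate (pure powers of $b$ versus everything with positive $\vb_2$-coordinate) is the crux, and I would take care to justify the reduction to homogeneous $f$ and the within-component description of $\preceq$, both of which rest on $b$ being homogeneous.
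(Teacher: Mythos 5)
Your proof is correct, but both halves diverge from the paper's argument in instructive ways. For finite generation the paper is entirely explicit: for each $i$ it writes $\ell_i\ub_i=a_i\vb_1+b_i\vb_2$, forms the polynomials $x^{\ub_i+(b_i-1+k)\vb_2}(x^{\vb_1}+x^{\vb_2})^{a_i+1-k}$ together with $(x^{\ub_i})^{\ell_i+1}$, and shows via the regularity of a binomial-coefficient matrix that $x^{\ub_i+(a_i+b_i)\vb_2}$ is a $\kk$-linear combination of them; iterating yields the concrete finite generating set $\{x^{\vb_1}+x^{\vb_2}\}\cup\bigcup_i\{x^{\ub_i},\ldots,x^{\ub_i+(a_i+b_i-1)\vb_2}\}$. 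Your Artin--Tate--style sandwich $A\subseteq R\subseteq\kk[C\cap(\intnonneg)^2]$, with the symmetrization trick producing a monic equation for $x^{\vb_1+\vb_2}$ over $A$ and hence integrality of $x^{\vb_1}$ and $x^{\vb_2}$, is shorter and more conceptual, at the price of being non-constructive. For $\ini_{\preceq}{R}=\kk[M]$ the two arguments rest on the same dichotomy (a product containing a genuine $x^{\ub}$ factor has all of its support in $M$; a pure power $(x^{\vb_1}+x^{\vb_2})^{\alpha}$ contributes the uncancellable top monomial $x^{\alpha\vb_1}$), but the paper detects non-cancellation with the supporting linear functional $\wb_1$ that vanishes on $\vb_1$ and is positive on $C^{\circ}$, whereas you use the grading and the $\vb_1$-coordinate.

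One caveat on that second step: your version genuinely uses $|\vb_1|=|\vb_2|$, both to make equidegree exponents differ by a multiple of $\vb_1-\vb_2$ and to collapse $P(b)$ to a single power $c_Kb^K$. The theorem as literally stated, and the paper's own proof, impose no homogeneity on $x^{\vb_1}+x^{\vb_2}$. Since the paper's declared scope is homogeneous subalgebras this is harmless in context, and your argument repairs itself by replacing the total-degree grading with the grading by $\mathbb{Z}^2/\mathbb{Z}(\vb_1-\vb_2)$, for which $x^{\vb_1}+x^{\vb_2}$ is homogeneous and every $x^{\ub}$ remains homogeneous; but as written you prove slightly less than the paper does.
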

\begin{proof}
First, we prove that $R$ is finitely generated. 

Since $\vb_1$ and $\vb_2$ are linearly independent and $\ub_1, \ub_2, \ldots, \ub_s \in C^{\circ}$, 
there exist positive integers $\ell_i, a_i, b_i$ such that $$\ell_i\ub_i = a_i\vb_1 + b_i\vb_2 $$ holds for each $i=1,\ldots,s$. 
Fix $i$. Let  $$f_0 = x^{\ub_i + a_i\vb_1 + b_i\vb_2} \;\; (= (x^{\ub_i})^{\ell_i+1})$$ and \begin{align*}
f_k&=x^{\ub_i+(b_i-1+k)\vb_2}\left(x^{\vb_1}+x^{\vb_2}\right)^{a_i+1-k} \\
&=\sum_{j=0}^{a_i+1-k}\binom{a_i+1-k}{j}x^{\ub_i+(a_i+1-k-j)\vb_1+(b_i-1+k+j)\vb_2} 
\end{align*}
for $k=1,\ldots,a_i$. 
%\begin{align*}
%    f_1 &=  \sum_{j=0}^{a_i} \binom{a_i}{j}x^{\ub_i+(a_i-j)\vb_1+(b_i+j)\vb_2} \\
%    f_2 &=  \sum_{j=0}^{a_i-1} \binom{a_i-1}{j}x^{u_i+(a_i-j-1)v_1+(b_i+j+1)v_2} \\
%    & \vdots \\
%    f_{a_i} &= \sum_{j=0}^{1} \binom{1}{j}x^{u_i+(1-j)v_1+(a_i+b_i-1+j)v_2}. 
%\end{align*}
Let $V$ be the $\kk$-vector space with a basis $$\{m_p\coloneqq x^{\ub_i+(a_i-p)\vb_1+(b_i+p)\vb_2} \mid p=0,1,\ldots,a_i\}.$$ 
Then $f_0, f_1, \ldots, f_{a_i}$ belong to $V$. Let 
$$A =
\begin{pmatrix}
  1 & \binom{a_i}{0} & 0 &  & 0 & 0 \\
  0 & \binom{a_i}{1} & \binom{a_i-1}{0} &  & \vdots & \vdots \\
    &  & \binom{a_i-1}{1} & \cdots & 0 &  \\
  \vdots & \vdots & \vdots &  & \binom{2}{0} & 0 \\
  & \binom{a_i}{a_i-1} & \binom{a_i-1}{a_i-2} &  & \binom{2}{1} & \binom{1}{0} \\
  0 & \binom{a_i}{a_i} & \binom{a_i-1}{a_i-1} &  & \binom{2}{2} & \binom{1}{1}
\end{pmatrix}
$$
be the $(a_i+1)\times(a_i+1)$-matrix. 
Then $$(m_0 \; m_1 \; \cdots \; m_{a_i})A=(f_0 \; f_1 \; \cdots \; f_{a_i}).$$ 
Now, we claim that $f_0,f_1,\ldots,f_{a_i}$ also form a basis of $V$ by seeing that $A$ is regular. 
We subtract $3$-rd column from $2$-nd column, $4$-th column from $3$rd column, $\ldots$, and $(a_i+1)$-th column from $a_i$-th column. 
Then $A$ is transformed into
  $$ 
  \begin{pmatrix}
    1 & \binom{a_i-1}{0} & 0 & & 0 & 0 \\
    0 & \binom{a_i-1}{1} & \binom{a_i-2}{0} & & \vdots & \vdots \\
     & & \binom{a_i-2}{1} & \cdots & 0 &  \\
     \vdots & \vdots & \vdots & & \binom{1}{0} & 0 \\
    & \binom{a_i-1}{a_i-1} & \binom{a_i-2}{a_i-2} & & \binom{1}{1} & \binom{1}{0} \\
    0 & 0 & 0 & & 0 & \binom{1}{1}
  \end{pmatrix}
  .$$
%  since
%  \begin{equation*}
%    \left\{ \,
%        \begin{aligned}
%        & \binom{n}{r} - \binom{n-1}{r-1} = \binom{n-1}{r} \\
%        & \binom{n}{0} = \binom{n-1}{0} = \binom{n}{n} = \binom{n-1}{n-1} = 1 \quad (1 \leq r \leq n).
%        \end{aligned}
%    \right.
%    \end{equation*}
Hence, by induction on $a_i$, we can see that $A$ is regular. 

Therefore, the monomial $m_{a_i}=x^{\ub_i+(a_i+b_i)\vb_2} \in V$ can be written as a (unique) $\kk$-linear combination of $f_0, f_1, \ldots, f_{a_i}$. 
This means that the monomial $x^{\ub_i+(a_i+b_i)\vb_2}$ belongs to 
$$
  \kk[x^{\vb_1}+x^{\vb_2}, x^{\ub_i}, x^{\ub_i+b_i\vb_2}, x^{\ub_i+(b_i+1)\vb_2}, \ldots, x^{\ub_i+(a_i+b_i-1)\vb_2}]. 
$$
Similarly to the above discussion, we can show that the monomial $x^{\ub_i+(a_i+b_i+1)\vb_2}$ can be written as a $\kk$-linear combination of the polynomials 
\begin{align*}
    (x^{\ub_i})^{\ell_i}(x^{\ub_i+\vb_2}) \text{ and } x^{\ub_i+(b_i+k)\vb_2}(x^{\vb_1}+x^{\vb_2})^{a_i+1-k} \;\text{ for }k=1,\ldots,a_i. 
\end{align*}
By applying these repeatedly, we can show that all monomials $x^{\ub_i+j\vb_2}$ for $j \in \intnonneg$ belong to 
$\kk[x^{\vb_1}+x^{\vb_2}, x^{\ub_i}, x^{\ub_i+\vb_2}, \ldots, x^{\ub_i+(a_i+b_i-1)\vb_2}]$. Thus, the $\kk$-algebra generated by 
$$
  \{x^{\vb_1}+x^{\vb_2}\} \cup \bigcup_{i=1}^{s}\{x^{\ub_i}, x^{\ub_i+\vb_2}, \ldots, x^{\ub_i+(a_i+b_i-1)\vb_2}\} 
$$
coincides with $R$, which is finitely generated. 

\medskip

Next, we prove that $\ini_{\preceq}{R} = \kk[M]$. It is sufficient to prove that $G$ is a SAGBI basis of $R$. 
Namely, we prove that for any polynomial $f \in R$, 
$\ini_{\preceq}{f}$ can be written as a product of finitely many monomials in $\{\ini_{\preceq}{g} \mid g \in G\}$. 
Since $f$ can be written as
  \begin{equation}\label{eqn:sum representation of f}
    f = \sum_{i} c_i(x^{\vb_1}+x^{\vb_2})^{\alpha^{(i)}}(x^{\wb_1})^{\beta_1^{(i)}}(x^{\wb_2})^{\beta_2^{(i)}}\cdots(x^{\wb_t})^{\beta_t^{(i)}}, 
  \end{equation}
where $c_i \in \kk, \alpha^{(i)}, \beta_1^{(i)}, \beta_2^{(i)}, \ldots, \beta_t^{(i)} \in \intnonneg$ and $\wb_1, \wb_2, \ldots, \wb_t \in L$,
the initial monomial $\ini_{\preceq}{f}$ belongs to 
$$\mathcal{S}\coloneqq\supp((x^{\vb_1}+x^{\vb_2})^{\alpha^{(i)}}(x^{\wb_1})^{\beta_1^{(i)}}(x^{\wb_2})^{\beta_2^{(i)}}\cdots(x^{\wb_t})^{\beta_t^{(i)}})$$ for some $i$. 
\begin{itemize}
\item If one of $\beta_1^{(i)}, \beta_2^{(i)}, \ldots, \beta_t^{(i)}$ is not $0$, then all monomials in $\mathcal{S}$ 
can be written as $x^{m_1\ub_1+m_2\ub_2+\ldots+m_s\ub_s+a\vb_1+b\vb_2}$ with $m_1, m_2, \ldots, m_s, a, b \in \intnonneg$ 
and one of $m_1, m_2, \ldots, m_s$ is positive, say, $m_1 > 0$. Then we have 
\begin{equation*}
    x^{m_1\ub_1+m_2\ub_2+\cdots+m_s\ub_s+a\vb_1+b\vb_2} = (x^{\vb_1})^a(x^{\ub_1})^{m_1-1}(x^{\ub_2})^{m_2}\cdots(x^{\ub_s})^{m_s}(x^{\ub_1+b\vb_2}). 
\end{equation*}
\item If $\beta_1^{(i)} = \beta_2^{(i)} = \ldots = \beta_t^{(i)} = 0$, i.e., $\ini_{\preceq}{f} \in \supp{(x^{\vb_1}+x^{\vb_2})^{\alpha^{(i)}}}$, 
since $\ub_1, \ub_2, \ldots, \ub_s \in C^{\circ}$, the monomial $x^{\alpha^{(i)}\vb_1}$ cannot be written like 
$$x^{m_1\ub_1+m_2\ub_2+\cdots+m_s\ub_s+a\vb_1+b\vb_2}$$
with $m_1, m_2, \ldots, m_s, a, b \in \intnonneg$ and one of $m_1, m_2, \ldots, m_s$ is not $0$. 
Therefore, the cancellation of the monomials of the form $x^{\alpha^{(i)}\vb_1}$ in~\eqref{eqn:sum representation of f} never happens, 
i.e., $x^{\alpha^{(i)}\vb_1}$ definitely appears in $f$. 
Since $x^{\alpha^{(i)}\vb_1}$ is the strongest monomial with respect to $\preceq$ in $\supp{(x^{\vb_1}+x^{\vb_2})^{\alpha^{(i)}}}$, 
we have $\ini_{\preceq}{f} = (x^{\vb_1})^{\alpha^{(i)}}$. 
\end{itemize}

By these discussions, we see that for any $f \in R$, the initial monomial $\ini_{\preceq}{f}$ can be written 
as a product of finitely many monomials in $\{\ini_{\preceq}{g} \mid g \in G\}$. Therefore, $G$ is a SAGBI basis of $R$. 
\end{proof}

We provide three families of examples of Theorem~\ref{thm:fg subalg}. 
Each of Examples~\ref{exam:monoid of changing cone}, \ref{exam:u equal a comma b} and \ref{exam:many irreducible lines} 
generalizes the initiated example \cite[1.20]{robbianosubalgebra} of finitely generated $\kk$-algebra whose initial algebra is not finitely generated. 
In what follows, let $\preceq$ be a monomial order with $x \succeq y$. 

\begin{exam}\label{exam:monoid of changing cone}
Given $\vb_1, \vb_2 \in (\intnonneg)^2$ which are linearly independent, let $$R_1=\kk[x^{\vb_1}+x^{\vb_2}, x^{\vb_1+\vb_2}, x^{\vb_1+2\vb_2}].$$
Then \begin{equation*}
    \{x^{\vb_1}+x^{\vb_2}\} \cup \{x^{\vb_1+m\vb_2} \mid m \geq 1\} 
\end{equation*}
forms a SAGBI basis of $R_1$ with respect to $\preceq$. 
In particular, the case with $\vb_1=(1, 0), \vb_2=(0, 1)$ is the same as that of \cite[1.20]{robbianosubalgebra}. 
\end{exam}

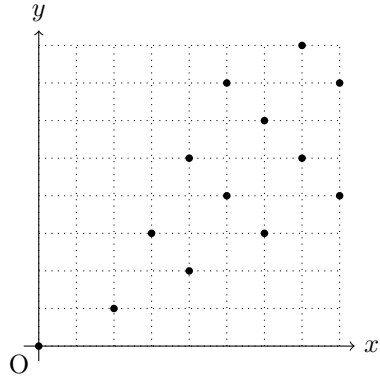
\begin{figure}[h]
%  \centering
  \begin{tikzpicture}[scale=0.5]
    \draw [dotted] (0,0) grid (8,8);
    \draw [->] (-0.4, 0) -- (8.4, 0) node[right]{$x$};
    \draw [->] (0, -0.4) -- (0, 8.4) node[above]{$y$};
    \fill [black] (0, 0) circle(0.1) node[below left]{O};
    \foreach \x in {1, 2, 3, 4} \fill [black] (\x*2, \x*1) circle(0.1);
    \foreach \x in {1, 2, 3} \foreach \y in {1, 2} \fill [black] (\x*2+\y*1, \x*1+\y*2) circle(0.1);
    \fill [black] (5, 7) circle(0.1);
    \fill [black] (7, 8) circle(0.1);
   \end{tikzpicture}
\caption{The monoid $M$ of Example~\ref{exam:monoid of changing cone} in the case with $\vb_1=(2, 1), \vb_2=(1, 2)$}
\end{figure}
  
\begin{exam}\label{exam:many irreducible lines}
  Given a positive integer $s$, let $$R_2=\kk[x^s+y^s, x^sy^s, x^sy^{2s}, x^{s+1}y^{s-1}, x^{s+1}y^{2s-1}, \ldots, x^{2s-1}y, x^{2s-1}y^{s+1}].$$
  Then 
  \begin{equation*}
    \{x^s + y^s\} \cup \bigcup_{i=0}^{s-1} \{x^{s+i}y^{s-i+sm} \mid m \in \intnonneg\}
  \end{equation*}
  forms a SAGBI basis of $R_2$. 
  In particular, the case $s = 1$ is the same as~\cite[1.20]{robbianosubalgebra}. 
\end{exam}

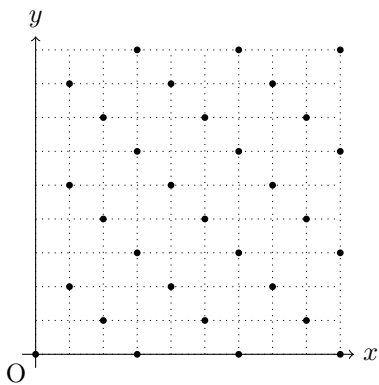
\begin{figure}[h]
%  \centering
    \begin{tikzpicture}[scale=0.45]
      \draw [dotted] (0,0) grid (9,9);
      \draw [->] (-0.4, 0) -- (9.4, 0) node[right]{$x$};
      \draw [->] (0, -0.4) -- (0, 9.4) node[above]{$y$};
      \fill [black] (0, 0) circle(0.1) node[below left]{O};
      \foreach \x in {1, 2, 3} \fill [black] (3*\x, 0) circle(0.1);
      \foreach \x in {1, 2, 3} \foreach \y in {1, 2, 3} \fill [black] (3*\x, 3*\y) circle(0.1);
      \foreach \x in {1, 2, 3} \foreach \y in {1, 2, 3} \fill [black] (3*\x-1, 3*\y-2) circle(0.1);
      \foreach \x in {1, 2, 3} \foreach \y in {1, 2, 3} \fill [black] (3*\x-2, 3*\y-1) circle(0.1);
      \end{tikzpicture}
      \caption{The monoid $M$ of Example~\ref{exam:many irreducible lines} in the case with $s=3$}
\end{figure}

\begin{exam}\label{exam:u equal a comma b}
  Given positive integers $a,b$, let $$R_3=\kk[x+y, x^ay^b, x^ay^{b+1}, \ldots, x^ay^{a+2b-1}].$$ Then 
  \begin{equation*}
    \{x + y\} \cup \{x^ay^m \mid m \geq b\}
  \end{equation*}
  forms a SAGBI basis of $R_3$. 
  In particular, the case $a = b = 1$ is the same as~\cite[1.20]{robbianosubalgebra}.
\end{exam}

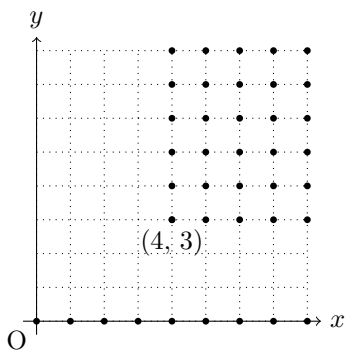
\begin{figure}[h]
%  \centering
    \begin{tikzpicture}[scale=0.45]
      \draw [dotted] (0,0) grid (8,8);
      \draw [->] (-0.4, 0) -- (8.4, 0) node[right]{$x$};
      \draw [->] (0, -0.4) -- (0, 8.4) node[above]{$y$};
      \fill [black] (0, 0) circle(0.1) node[below left]{O};
      \fill [black] (4, 3) circle(0.1) node[below]{(4, 3)};
      \foreach \x in {1, 2, 3, 4, 5, 6, 7, 8} \fill [black] (\x, 0) circle(0.1);
      \foreach \x in {4, 5, 6, 7, 8} \foreach \y in {3, 4, 5, 6, 7, 8} \fill [black] (\x, \y) circle(0.1);
      \end{tikzpicture}
      \caption{The monoid $M$ of Example~\ref{exam:u equal a comma b} in the case with $a=4, b=3$}
\end{figure}

\begin{rem}
Readers may have a doubt about the similarity between our examples and the ones developed in~\cite{kurodanewclass}. 
On the one hand, Examples~\ref{exam:monoid of changing cone} and \ref{exam:many irreducible lines} can be obtained via the method developed in~\cite{kurodanewclass}. 
In fact, for Example~\ref{exam:monoid of changing cone}, we may assign $x^{\vb_1}$ and $x^{\vb_2}$ instead of $x$ and $y$ in \cite[Example 2.7]{kurodanewclass}, 
and for Example~\ref{exam:many irreducible lines}, we may assign $x^s$ and $y^s$ and $U=\{x^{s+1}y^{s-1},x^{s+2}y^{s-2},\ldots,x^{2s-1}y\}$, 
where $U$ appears in (A3) of the construction developed in \cite{kurodanewclass}. 

On the other hand, most cases of Example~\ref{exam:u equal a comma b} cannot be obtained in that way. 
In fact, we can observe that the subalgebra $R$ constructed in the way of \cite{kurodanewclass} always contains $xy$ whenever $R$ contains $x+y$. 
%if we set the cone $C = (\intnonneg)^2$ and assume $x+y$ is in a subalgebra, the subalgebra also has $xy$. 
This implies that the algebra $R_3$ cannot be equal to $R$ constructed in~\cite{kurodanewclass}. 
%{\LARGE{\bf I couldn't understand this part!!!}} 
\end{rem}

\bigskip

\section{Nonexistence of the monoid algebra as an initial algebra}\label{Nonexistence of the monoid algebra as a initial algebra}
This section is devoted to proving the second main theorem, which provides an example of a monoid 
whose algebra cannot be realized as any initial algebra of a finitely generated homogeneous $\kk$-algebra. 
\begin{thm}\label{thm:nonexistence of the monoid}
Let $M$ be a submonoid of $(\intnonneg)^2$ generated by infinitely many irreducible elements $\{(1, n^2) \mid n \in \intnonneg\}$. 
For any subalgebra $R$ generated by finitely many homogeneous polynomials in $\kk[x, y]$ and any monomial order $\preceq$ in $(\intnonneg)^2$, 
the initial algebra $\ini_{\preceq}{R}$ cannot be equal to $\kk[M]$. 
\end{thm}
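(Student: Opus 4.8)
The plan is to pass from $\kk[M]$ to a rigid numerical invariant, namely the Hilbert function, on which finite generation and homogeneity of $R$ force strong structure, and then to contradict that structure by a number-theoretic computation on $M$.

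First I would describe $M$ explicitly. Since $M$ is generated by the vectors $(1,n^2)$, an element with first coordinate $k\ge 1$ is a sum of $k$ generators, so $(k,m)\in M$ precisely when $m$ is a sum of $k$ squares with zeros allowed, i.e. a sum of at most $k$ positive squares; the only element with first coordinate $0$ is the origin. By Lagrange's four-square theorem every $m$ is a sum of at most four squares, so $(k,m)\in M$ for all $k\ge 4$. Intersecting $M$ with the total-degree line $\{(k,m):k+m=d\}$ therefore gives, for $d\ge 4$, the points $k=4,\dots,d$ together with $k=1,2,3$ occurring exactly when $d-1$ is a perfect square, $d-2$ is a sum of two squares, and $d-3$ is a sum of three squares, respectively (by Legendre, $d-3$ fails iff $d-3=4^a(8b+7)$). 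Hence, writing $N_d$ for the number of lattice points of $M$ on that line, one gets $N_d=d-3+\epsilon_d$ for $d\ge 4$, where $\epsilon_d\in\{0,1,2,3\}$ records these three conditions.

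Next I would turn $N_d$ into the Hilbert function of $R$. Suppose for contradiction that $\ini_{\preceq}R=\kk[M]$ for some finitely generated homogeneous $R$ and some monomial order. Because $R$ is homogeneous, so is $\ini_{\preceq}R=\kk[M]$, and $\dim_\kk R_d=\dim_\kk(\ini_{\preceq}R)_d$ equals the number of monomials $x^\ub$ with $\ub\in M$ of total degree $d$, namely $N_d$; in particular this count is independent of the chosen order, so the monomial order plays no role. Since $R$ is a finitely generated graded $\kk$-algebra with $R_0=\kk$, Hilbert--Serre yields that its Hilbert series is rational with denominator a product of factors $1-t^{e_i}$, so $N_d$ agrees for $d\gg 0$ with a quasi-polynomial of period dividing $\mathrm{lcm}(e_i)$. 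As $0\le N_d-(d-3)\le 3$ is bounded while $N_d\to\infty$, this quasi-polynomial must have degree $1$ with leading coefficient identically $1$; consequently $N_d-d$, and hence $\epsilon_d$, is eventually periodic.

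It therefore remains to show that $\epsilon_d$ is not eventually periodic, and this is the crux. Write $S_2,S_3$ for the sets of sums of two and of three squares and $E:=\intnonneg\setminus S_3=\{4^a(8b+7)\}$. Perfect squares and elements of $S_2$ each have density zero (elementary for squares, classical for $S_2$), so off a density-zero set of $d$ one has $\epsilon_d=\mathbf 1[d-3\in S_3]$. If $\epsilon_d$ were periodic with period $\pi$, then on each residue class mod $\pi$ the indicator $\mathbf 1[\,\cdot\in E]$ would take a single value off a density-zero set, i.e. $E$ would occupy within-class density $0$ or $1$ in every class mod $\pi$. I would contradict this using the self-similarity $E\cap 4\intnonneg=4E$: computing the within-class density of $E$ on the class of multiples of $\pi$, by splitting according to the $2$-adic valuation and the odd part modulo $8$, yields the value $\tfrac16$ or $\tfrac1{12}$, which lies strictly between $0$ and $1$. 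This contradiction shows $\kk[M]$ is not an initial algebra. The main obstacle is exactly this last step: establishing that the three-squares pattern, perturbed on a density-zero set, is genuinely aperiodic, for which the self-similar description of $E$ together with the density-zero inputs for one and two squares seems essential.
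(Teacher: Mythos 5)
Your proposal is correct, and it takes a genuinely different route from the paper. The paper argues directly on the reduced SAGBI basis $\{g_n\}$ with $\ini_{\preceq}{g_n}=xy^{n^2}$: a cone argument forces some pure power $y^a$ to appear in a support, explicit subduction identities together with a Gr\"obner basis computation (and the fact that squares are $0$ or $1$ modulo $4$) force $g_i=y^{i^2}(x+ay)$ for a single $a\in\kk\setminus\{0\}$, and then the substitution $x\mapsto x-ay$ annihilates all toric relations among $g_0,\ldots,g_m$, so these finitely many elements already form a reduced SAGBI basis, contradicting the uniqueness of the (infinite) reduced SAGBI basis. You instead exploit only the Hilbert function: the identification of $M$ with sums of at most $k$ squares is right, the count $N_d=d-3+\epsilon_d$ is right, the equality $\dim_\kk R_d=\dim_\kk(\ini_{\preceq}R)_d=N_d$ for a graded subalgebra is the standard triangularity fact, Hilbert--Serre forces $N_d$ (hence $\epsilon_d$) to be eventually periodic, and your within-class density computation for $E=\{4^a(8b+7)\}$ on an arithmetic progression (value $\tfrac16$ or $\tfrac1{12}$ according to the parity of the $2$-adic valuation of the period) correctly rules this out once squares and sums of two squares are discarded as density-zero sets. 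Your argument actually proves more: $\kk[M]$ cannot share its Hilbert function with any finitely generated graded algebra, so the conclusion is independent of the monomial order and of all SAGBI machinery, and the method extends to any monoid whose degree-counting function fails to be an eventual quasi-polynomial. What it costs is the analytic input (Lagrange, Legendre, and the Landau theorem that sums of two squares have density zero), whereas the paper's proof is self-contained apart from one finite Gr\"obner computation but is tightly adapted to the specific shape $xy^{n^2}$.
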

\begin{proof}
Suppose the existence of a subalgebra $R$ and a monomial order $\preceq$ such that $\ini_{\preceq}{R} = \kk[M]$. 
Let $g_n$ be a polynomial with $\ini_{\preceq}{g_n} = xy^{n^2}$ for each $n$ and let $G = \{g_0, g_1, \ldots\}$ be the reduced SAGBI basis of $R$. 
Since we assume that $R$ is generated by homogeneous polynomials of $\kk[x, y]$, the reduced SAGBI basis $G$ can be chosen as a set of homogeneous polynomials. 
Since $R$ is finitely generated and $R = \kk[g_0, g_1, \ldots]$, 
there exists $m \in \intnonneg$ such that $R = k[g_0, g_1, \ldots, g_m]$. Then $\deg_{\preceq}{R}$ is a submonoid of
the monoid generated by $\bigcup_{i=0}^{m}{\supp{g_i}}$ because all monomials appearing in polynomials of $R$ 
can be written as a product of finitely many monomials appearing in $g_0, g_1, \ldots, g_m$. 

\medskip

\noindent
{\bf (The first step)}: 
First, we prove that there exists a positive integer $a$ with $(0, a) \in \bigcup_{i=0}^{m}{\supp{g_i}}$. 
On the contrary, suppose that $(0, a) \not\in \bigcup_{i=0}^{m}{\supp{g_i}}$ for any $a \geq 1$. 
Let $C$ be the cone generated by $\bigcup_{i=0}^{m}{\supp{g_i}}$. 
Then $C$ does not include $\RR_{>0} (0,1)$. Since $C$ is finitely generated, we can take a point $(b_1, b_2) \in C$ satisfying $b_1>0$ and 
$\displaystyle y \leq \frac{b_2}{b_1}x$  for any $(x,y) \in C$. 
However, for sufficiently large $\ell$, we get $\displaystyle \ell^2 \geq \frac{b_2}{b_1}$, so $(1, \ell^2)$ is out of $C$, i.e., out of $\deg_{\preceq}{R}$, a contradiction.
    
Hence, we have $(0, a) \in \bigcup_{i=0}^{m}{\supp{g_i}}$ for some $a \geq 1$.

\medskip

\noindent
{\bf (The second step)}: Next, we prove that each $g_i$ can be written like $g_i=y^{i^2}(x+ay)$ for some $a \in \kk \setminus \{0\}$. 
Note that $a$ is independent of $i$. 

Since each $g_i$ is homogeneous, we can write 
\begin{align*}
      g_0 &= x + a_0y \\
      g_1 &= xy + \cdots + a_1y^2 + \cdots \\
      & \vdots \\
      g_m &= xy^{m^2} + \cdots + a_my^{m^2+1} + \cdots \\
      & \vdots
\end{align*}
with $a_0, a_1, \ldots, a_m, \ldots \in \kk$ and one of $a_0, a_1, \ldots, a_m$ is not $0$. 
Since $\ini_{\preceq}{g_i}=xy^{i^2}$, we get $xy^{i^2} \succeq y^{i^2+1}$. Therefore, we have $x \succeq y$.
For $b \geq 2$, monomials $x^by^{i^2+1-b}$ do not appear in $g_i$ because $x^by^{i^2+1-b} \succeq xy^{i^2}$. 
Thus we can rewrite $g_i$'s by $$g_i = xy^{i^2} + a_iy^{i^2+1}. $$ 

We prove $a_0 = a_1 = \ldots = a_m = \cdots$ by induction on $m$. In the case of $m=2$, we consider
\begin{align*}
      g &\coloneqq g_0^3g_2 - g_1^4 - (3a_0+a_2-4a_1)g_0g_1g_2 \\
      &\;= (a_{0}a_{1}-a_{0}a_{2}-2a_{1}^{2}+3a_{1}a_{2}-a_{2}^{2})x^2y^6 \\
      &\quad + (a_{0}^{3}-3a_{0}^{2}a_{1}+4a_{0}a_{1}^{2}-a_{0}a_{2}^{2}-4a_{1}^{3}+4a_{1}^{2}a_{2}-a_{1}a_{2}^{2})xy^7 \\
      &\quad + (a_{0}^{3}a_{2}-3a_{0}^{2}a_{1}a_{2}+4a_{0}a_{1}^{2}a_{2}-a_{0}a_{1}a_{2}^{2}-a_{1}^{4})y^8.
\end{align*}
Though $\ini_{\preceq}{g} \in \ini_{\preceq}{R}$, any monomials in $\{x^2y^6, xy^7, y^8\}$ cannot be written as a product of $x, xy, xy^4, xy^9,\ldots$. 
Thus, all coefficients of $g$ should be $0$. Let $h_1, h_2, h_3$ be the following polynomials in $\kk[X_0, X_1, X_2]$: 
\begin{align*}
      h_1 &= X_{0}X_{1}-X_{0}X_{2}-2X_{1}^{2}+3X_{1}X_{2}-X_{2}^{2}; \\
      h_2 &= X_{0}^{3}-3X_{0}^{2}X_{1}+4X_{0}X_{1}^{2}-X_{0}X_{2}^{2}-4X_{1}^{3}+4X_{1}^{2}X_{2}-X_{1}X_{2}^{2}; \\
      h_3 &= X_{0}^{3}X_{2}-3X_{0}^{2}X_{1}X_{2}+4X_{0}X_{1}^{2}X_{2}-X_{0}X_{1}X_{2}^{2}-X_{1}^{4}. 
\end{align*}
Then $a_0, a_1, a_2$ satisfy $h_1(a_0, a_1, a_2) = h_2(a_0, a_1, a_2) = h_3(a_0, a_1, a_2) = 0$. 
The reduced Gr\"{o}bner basis for the ideal generated by $h_1, h_2, h_3$ with respect to a lexicographic ordering is given by the following polynomials: 
\begin{align*}
      \tilde{h}_1 &= X_{1}^{4}-4X_{1}^{3}X_{2}+6X_{1}^{2}X_{2}^{2}-4X_{1}X_{2}^{3}+X_{2}^{4}=(X_1-X_2)^4; \\
      \tilde{h}_2 &= X_{0}X_{1}-X_{0}X_{2}-2X_{1}^{2}+3X_{1}X_{2}-X_{2}^{2}; \\
      \tilde{h}_3 &= X_{0}^{3}-3X_{0}^{2}X_{2}+3X_{0}X_{2}^{2}-8X_{1}^{3}+24X_{1}^{2}X_{2}-24X_{1}X_{2}^{2}+7X_{2}^{3}.
\end{align*}
Hence, $a_0, a_1, a_2$ also satisfy $\tilde{h}_1(a_0, a_1, a_2) = \tilde{h}_2(a_0, a_1, a_2) = \tilde{h}_3(a_0, a_1, a_2) = 0$.
We get $a_1 = a_2$ from $\tilde{h}_1(X_0, X_1, X_2)$, and $a_0 = a_1$ from $a_1 = a_2$ and $\tilde{h}_3(a_0, a_1, a_2) = 0$. 
Thus, we conclude $a_0 = a_1 = a_2$. 
    
Now, assume $a_0 = a_1 = \ldots = a_{i-1} = a$ for some $i \geq 3$. 
Let $b = a_i - a$ and suppose $b \ne 0$. If $i = 2k+1$ for some $k \geq 1$, then a direct computation implies that 
$$b^2g_1g_2g_{2k+1} - bg_0^{2}g_2g_{2k+1} + g_0g_1^{3}g_{2k+1} - g_0g_{k-1}g_{k+1}^{3} = b^3y^{4k^2+4k+7}(x+ay)^2$$
by using $g_j = y^{j^2}(x+ay), (0 \leq j \leq i-1)$ and $g_i = y^{i^2+1}(x+a_iy)$. The initial monomial of this polynomial is $x^2y^{4k^2+4k+7}$, 
but $x^2y^{4k^2+4k+7}$ cannot be written as a product of two monomials of $x, xy, xy^4, \ldots$ 
because any squares of integers are $0$ or $1$ modulo $4$, a contradiction.
Similarly, if $i = 2k$ for some $k \geq 2$, then we get a polynomial in $R$ 
$$b^2g_1^{2}g_{2k} - bg_0^{2}g_1g_{2k} + g_0^{4}g_{2k} - g_0g_k^{4} = b^3y^{4k^2+3}(x+ay)^2,$$
a contradiction. Thus, we have $b = 0$, i.e., $a_i = a$ for each $i \geq 0$. 

\medskip

\noindent
{\bf (The third step)}: 
%We determined $R = \kk[g_0, g_1, \ldots, g_m]$ and $g_i = xy^{i^2}+ay^{i^2+1}$ with $i \in \intnonneg$ and $a \in \RR \setminus \{0\}$. 
Finally, we prove that $\{g_0, g_1, \ldots, g_m\}$ forms a reduced SAGBI basis of $R$. This claim contradicts the uniqueness of the reduced SAGBI basis of $R$, 
so we can prove the nonexistence of homogeneous finitely generated subalgebra $R$ with $\ini_{\preceq}{R} = \kk[M]$. 

Let $\sigma$ be an automorphism of $\kk[x, y]$ defined by 
$$ \sigma(x) = x-ay \text{ and } \sigma(y) = y. $$
Then $\sigma(g_i) = xy^{i^2}$ for each $i$. Let $I_{\mathcal{A}}$ be the toric ideal of $2 \times m$-matrix 
$$ \mathcal{A} =
    \begin{pmatrix}
      1 & 1 & \ldots & 1 \\
      0 & 1 & \ldots & m^2
    \end{pmatrix}
$$
For all binomials $t^{\alpha_0} \cdots t^{\alpha_m} - t^{\beta_0} \cdots t^{\beta_m}$ in generators of $I_{\mathcal{A}}$, 
the image of $g_0^{\alpha_0} \cdots g_m^{\alpha_m} - g_0^{\beta_0} \cdots g_m^{\beta_m}$ by $\sigma$ is
\begin{eqnarray*}
      \sigma(g_0^{\alpha_0} \cdots g_m^{\alpha_m} - g_0^{\beta_0} \cdots g_m^{\beta_m})
      = (x)^{\alpha_0} \cdots (xy^{m^2})^{\alpha_m}  - (x)^{\beta_0} \cdots (xy^{m^2})^{\beta_m} 
      = 0. 
\end{eqnarray*}
This implies that $g_0^{\alpha_0} \cdots g_m^{\alpha_m} - g_0^{\beta_0} \cdots g_m^{\beta_m} = 0$. 
Therefore, all $g_0^{\alpha_0} \cdots g_m^{\alpha_m} - g_0^{\beta_0} \cdots g_m^{\beta_m}$ subduce to an element of $\kk$. 
From Proposition~\ref{prop:sagbi criterion}, we conclude that $\{g_0, g_1, \ldots, g_m\}$ is a finite SAGBI basis, and clearly reduced. 
\end{proof}

\bigskip

\section{Other examples}\label{Other examples}
In this section, we provide examples that do not suit Section~\ref{Examples of monoids and subalgebras}. 
Since these examples were found by computer experiments, we omit proofs of non-finitely generation. 

Our first interest is whether the converse of Lemma~\ref{lem:def monoid} and Theorem~\ref{thm:fg subalg} is true.
Our question can be rewritten more precisely into the following way. 
\begin{ques}\label{ques:converse of examples}
Let $C$ be a cone generated by $\vb_1, \vb_2 \in (\intnonneg)^2$, 
and let $$\{x^{\vb_1}+x^{\vb_2}\} \cup \{x^{\ub_i}\}_{i=1}^\infty$$ be a reduced SAGBI basis of some finitely generated $\kk$-subalgebra $R \subset \kk[x, y]$, 
where each $\ub_i$ belongs to $C^{\circ}$. Then, does there exist any monoid $M$ constructed in the way as in Lemma~\ref{lem:def monoid} 
together with a monomial order $\preceq$ such that $\ini_{\preceq}{R} = \kk[M]$? 
\end{ques}

This is not true in general as Examples~\ref{exam:counterexample1} and \ref{exam:counterexample2} indicate. 
Those examples are counterexamples of Question~\ref{ques:converse of examples}. 

\begin{exam}\label{exam:counterexample1}
Let $R = \kk[x+y, x^2y, x^2y^2, x^3y^3]$ and $\preceq$ a monomial order with $x \succeq y$. 
Then a SAGBI basis of $R$ with respect to $\preceq$ seems to be 
  \begin{equation*}
    \{x+y, x^2y, x^2y^2, x^3y^3, x^2y^4, x^2y^5, \ldots\}.
  \end{equation*}
This example is similar to Example~\ref{exam:u equal a comma b} with $a=2, b=1$, but $x^2y^3$ is not contained in $R$ in this example. 
Therefore, generators of the monoid cannot be written as a union of $\{(1, 0)\}$ and any $N$-modules with $N = \intnonneg (0, 1)$. 
\end{exam}

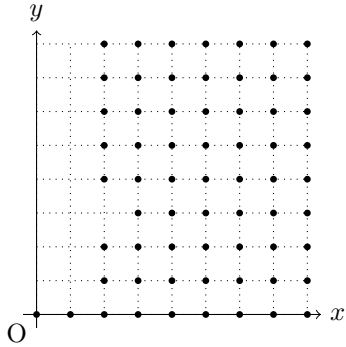
\begin{figure}[h]
    \begin{tikzpicture}[scale=0.45]
      \draw [dotted] (0,0) grid (8,8);
      \draw [->] (-0.4, 0) -- (8.4, 0) node[right]{$x$};
      \draw [->] (0, -0.4) -- (0, 8.4) node[above]{$y$};
      \fill [black] (0, 0) circle(0.1) node[below left]{O};
      \foreach \x in {1, 2, 3, 4, 5, 6, 7, 8} \fill [black] (\x, 0) circle(0.1);
      \foreach \x in {2, 3, 4, 5, 6, 7, 8} \foreach \y in {1, 2} \fill [black] (\x, \y) circle(0.1);
      \foreach \x in {3, 4, 5, 6, 7, 8} \fill [black] (\x, 3) circle(0.1);
      \foreach \x in {2, 3, 4, 5, 6, 7, 8} \foreach \y in {4, 5, 6, 7, 8} \fill [black] (\x, \y) circle(0.1);
      \end{tikzpicture}
\caption{Monoid of Example~\ref{exam:counterexample1}}
\end{figure}

\begin{exam}\label{exam:counterexample2}
Let $R = k[x^2+y^2, x^2y, x^2y^2]$ and let $\preceq$ be a monomial order with $x \succeq y$. 
Then a SAGBI basis of $R$ with respect to $\preceq$ seems to be
  \begin{equation*}
    \{x^2+y^2, x^2y, x^2y^2, x^2y^4, x^2y^6, \ldots\}.
  \end{equation*}
\end{exam}

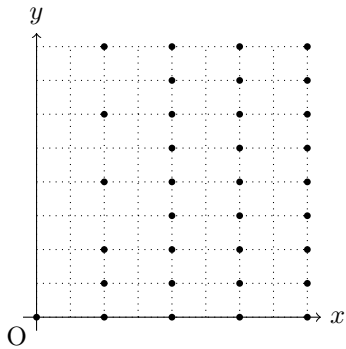
\begin{figure}[h]
    \begin{tikzpicture}[scale=0.45]
      \draw [dotted] (0,0) grid (8,8);
      \draw [->] (-0.4, 0) -- (8.4, 0) node[right]{$x$};
      \draw [->] (0, -0.4) -- (0, 8.4) node[above]{$y$};
      \fill [black] (0, 0) circle(0.1) node[below left]{O};
      \foreach \x in {1, 2, 3, 4} \foreach \y in {0, 1, 2, 4, 6, 8} \fill [black] (2*\x, \y) circle(0.1);
      \foreach \x in {2, 3, 4} \foreach \y in {3, 5, 7} \fill [black] (2*\x, \y) circle(0.1);
      \end{tikzpicture}
      \caption{Monoid of Example~\ref{exam:counterexample2}}
\end{figure}

Throughout this paper, we constructed finitely generated $\kk$-subalgebras (having an infinite SAGBI basis) 
generated by exactly one binomial $x^{\vb_1}+x^{\vb_2}$ and finitely many monomials $x^{\ub_1}, x^{\ub_2}, \ldots, x^{\ub_t}$ with
$\ub_1, \ub_2, \ldots, \ub_t \in C^\circ$, where $C=\realnonneg \vb_1 + \realnonneg \vb_2$. 
However, the condition ``$\ub_1, \ub_2, \ldots, \ub_t \in C^{\circ}$'' is not necessary for the infiniteness of SAGBI basis. 
\begin{exam}
Let $R = \kk[xy+y^2, x, xy^2]$ and $\preceq$ a monomial order with $x \succeq y$. Then a SAGBI basis of $R$ with respect to $\preceq$ seems to be 
\begin{equation*}
    \{x, xy+y^2, xy^2, 2xy^3+y^4, xy^4, 3xy^5+y^6, \ldots\}
\end{equation*}
Computing SAGBI basis of $R$, monomials and binomials appear in this SAGBI basis alternately. 
This example is almost the same as~\cite[4.11]{robbianosubalgebra} up to sign. 
\end{exam}

Finally, we introduce the most complicated example through our experiments.
\begin{exam}\label{exam:complicated example}
Let $R = \kk[x^2-y^2, x^3-y^3, x^4-y^4]$ and $\preceq$ a monomial order with $x \succeq y$. Computing a SAGBI basis of $R$, 
we observe that the following monomials appear as initial terms: 
\begin{eqnarray*}
    x^2, x^3, x^2y^2, x^3y^3, x^5y^7, x^6y^8, x^6y^{10}, x^7y^{11}, x^7y^{13}, x^8y^{14}, x^8y^{16}, x^9y^{17}, x^9y^{19}, \ldots.
\end{eqnarray*}
Thus, generators of $M$ which satisfy $\ini_{\preceq}{R} = \kk[M]$ are
\begin{equation*}
    (2, 0), (3, 0), (2, 2), (3, 3), (5, 7), (6, 8), (6, 10), (7, 11), (7, 13), (8, 14), (8, 16), (9, 17), (9, 19), \ldots.
\end{equation*}
  All points following $(3, 3)$ are contained in
\begin{equation*}
    \{(5, 7) + m(1, 3) \mid m \in \intnonneg\} \cup \{(6, 8) + m(1, 3) \mid m \in \intnonneg\}.
\end{equation*}
However, first four points $(2, 0), (3, 0), (2, 2), (3, 3)$ are not contained there, 
and $(2, 2) + (1, 3) = (3, 5)$ and $(3, 3) + (1, 3) = (4, 6)$ are not contained in $M$. 
Moreover, the monomial $xy^3$ does not appear in generators of $R$. 

In contrast, let us change the signs of generators of $R$, i.e. let $R = \kk[x^2+y^2, x^3+y^3, x^4+y^4]$. 
Then $R$ has a finite SAGBI basis with respect to a monomial order with $x \succeq y$, which is 
  \begin{equation*}
    \{x^2+y^2, x^3+y^3, x^2y^2, x^3y^3\}.
  \end{equation*}
\end{exam}

\begin{figure}[ht]
  \centering
  \includegraphics[scale=0.5]{./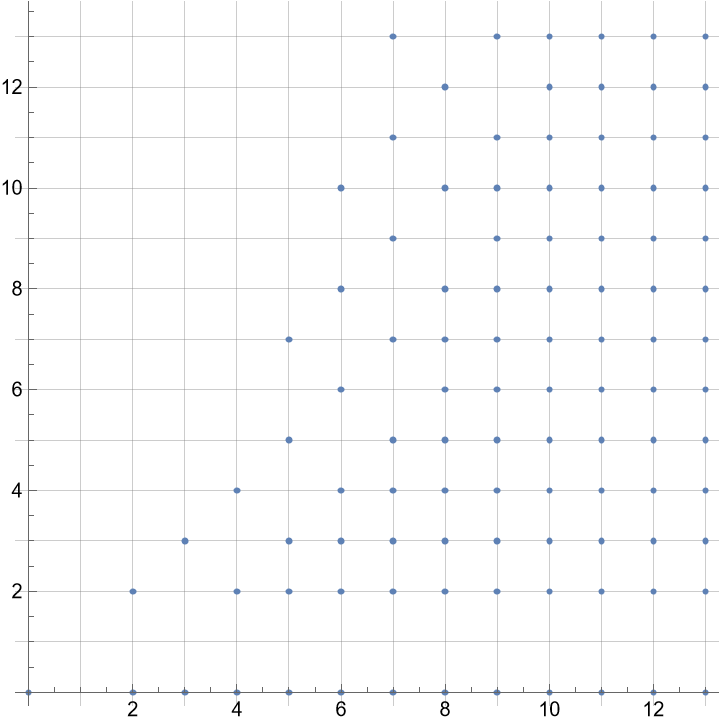}
  \caption{Non-finitely generated monoid of Example~\ref{exam:complicated example} with {\tt Mathematica}}
\end{figure}

\bibliographystyle{amsplain}
\bibliography{paperref}

\providecommand{\bysame}{\leavevmode\hbox to3em{\hrulefill}\thinspace}
\providecommand{\MR}{\relax\ifhmode\unskip\space\fi MR }
% \MRhref is called by the amsart/book/proc definition of \MR.
\providecommand{\MRhref}[2]{%
  \href{http://www.ams.org/mathscinet-getitem?mr=#1}{#2}
}
\providecommand{\href}[2]{#2}
\begin{thebibliography}{1}

\bibitem{bruns2009polytopes}
W.~Bruns and J.~Gubeladze, \emph{Polytopes, {R}ings, and {K}-{T}heory}, Springer Monographs in Mathematics, Springer New York, 2009.

\bibitem{SubalgebraBasesSource}
Michael Burr, Oliver Clarke, Timothy Duff, Jackson Leaman, Nathan Nichols, and Elise Walker, \emph{{SubalgebraBases: Canonical subalgebra bases (aka SAGBI/Khovanskii bases). Version~1.4}}, A \emph{Macaulay2} package available at \url{https://github.com/Macaulay2/M2/tree/master/M2/Macaulay2/packages}.

\bibitem{M2}
Daniel~R. Grayson and Michael~E. Stillman, \emph{Macaulay2, a software system for research in algebraic geometry}, Available at \url{http://www2.macaulay2.com}.

\bibitem{kurodanewclass}
Shigeru Kuroda, \emph{A new class of finitely generated polynomial subalgebras without finite sagbi bases}, Proc. Amer. Math. Soc. \textbf{151} (2023), no.~2, 533--545.

\bibitem{robbianosubalgebra}
Lorenzo Robbiano and Moss Sweedler, \emph{Subalgebra bases}, Commutative Algebra (Berlin, Heidelberg) (Winfried Bruns and Aron Simis, eds.), Springer Berlin Heidelberg, 1990, pp.~61--87.

\bibitem{sturmfelsgrobner}
B.~Sturmfels, \emph{Gr{\"o}bner bases and convex polytopes}, Univ. Lectures Series, vol.~8, American Mathematical Soc., 1996.

\end{thebibliography}
\end{document}